\newtheorem{theorem}{Theorem}
\newtheorem{lemma}{Lemma}
\newtheorem{proposition}{Proposition}
\newtheorem{corollary}{Corollary}
\newtheorem*{LST}{Local Structure Theorem}
\theoremstyle{definition}
\newtheorem{definition}{Definition}
\theoremstyle{remark}
\newtheorem{remark}{Remark}
\newcommand{\AAA}{\mathbb{A}}
\newcommand{\ab}{\mathfrak{a}}
\newcommand{\ann}{\perp}
\newcommand{\Cc}{\mathcal{C}}
\newcommand{\codim}{\operatorname{codim}}
\newcommand{\cork}{\operatorname{cork}}
\newcommand{\Dd}{\mathcal{D}}
\newcommand{\df}{\operatorname{def}}
\newcommand{\g}{\mathfrak{g}}
\newcommand{\GL}{\mathrm{GL}}
\newcommand{\gr}{\operatorname{gr}}
\newcommand{\Ho}{\mathrm{H}}
\newcommand{\Ii}{\mathcal{I}}
\renewcommand{\Im}{\operatorname{Im}}
\newcommand{\Ker}{\operatorname{Ker}}
\newcommand{\kk}{\Bbbk}
\newcommand{\lv}{\mathfrak{l}}
\newcommand{\m}{\mathfrak{m}}
\newcommand{\Mo}{\oo{M}}
\newcommand{\N}{\mathfrak{N}}
\newcommand{\Nn}{\mathcal{N}}
\newcommand{\Oo}{\mathcal{O}}
\newcommand{\oo}[1]{\mathaccent"7017{#1}}
\newcommand{\p}{\mathfrak{p}}
\newcommand{\PP}{\mathbb{P}}
\newcommand{\q}{\mathfrak{q}}
\newcommand{\rk}{\operatorname{rk}}
\newcommand{\Ru}[2][\relax]{{#2}_{\mathrm{u}}\ifx#1\relax\else{\mspace{-7.8mu}}^{#1}\fi}
\newcommand{\SL}{\mathrm{SL}}
\newcommand{\So}{\oo{S}}
\newcommand{\sort}{\angle}
\newcommand{\Spec}{\operatorname{Spec}}
\newcommand{\Sym}{\mathrm{S}}
\newcommand{\tr}{\operatorname{tr}}
\newcommand{\Tt}{\mathcal{T}}
\newcommand{\un}{\mathfrak{u}}
\newcommand{\Uu}{\mathcal{U}}
\newcommand{\Vv}{\mathcal{V}}
\newcommand{\WL}[1]{\Lambda(#1)}
\newcommand{\Ww}{\mathcal{W}}
\newcommand{\Xo}{\oo{X}}
\newcommand{\Yo}{\oo{Y}}
\newcommand{\z}{\mathfrak{z}}
\newcommand{\ZZ}{\mathbb{Z}}
\newcommand{\Zz}{\mathcal{Z}}
\begin{document}

\title[Actions with invariant Lagrangian subvarieties]{Hamiltonian actions on symplectic varieties with invariant Lagrangian subvarieties}

\author{Dmitry A.~Timashev}
\address{
Lomonosov Moscow State University\\
Faculty of Mechanics and Mathematics\\
Department of Higher Algebra\\
119991 Moscow, Russia}
\email{timashev@mccme.ru}

\author{Vladimir S.~Zhgoon}
\address{
Science Research Institute of System Studies\\
Department of Mathematical Problems in Informatics\\
Nakhimovskii prospect 36-1\\
117218 Moscow, Russia}
\email{zhgoon@mail.ru}


\subjclass[2010]{Primary 14L30; Secondary 53D12, 53D20}
\keywords{Symplectic variety, Lagrangian subvariety, (co)isotropic subvariety, Hamiltonian action, moment map, cotangent bundle, (co)normal bundle, corank, defect, complexity, rank}

\date{September 24, 2011}

\begin{abstract}
We prove several results on symplectic varieties with a Hamiltonian action of a reductive group having invariant Lagrangian subvarieties. Our main result states that the images of the moment maps of a Hamiltonian variety and of the cotangent bundle over an invariant Lagrangian subvariety coincide. This implies that the complexity and rank of the Lagrangian subvariety are equal to the half of the corank and to the defect of the Hamiltonian variety, respectively. This result generalizes a theorem of Panyushev on the complexity and rank of a conormal bundle. A simple elementary proof of this theorem is also given in the paper.
A generalization of the above results to some special class of invariant coisotropic subvarieties is obtained.
\end{abstract}

\maketitle

\section*{Introduction}

The aim of this paper is to study Hamiltonian actions of reductive groups on symplectic algebraic varieties with invariant Lagrangian subvarieties. Most natural examples are obtained by taking any smooth algebraic variety $X$ acted on by a reductive group $G$ and considering the cotangent bundle $T^*X$ with the induced $G$-action and canonical symplectic form; here the zero section is an invariant Lagrangian subvariety. In fact, any symplectic variety in a neighborhood of a Lagrangian subvariety has the same structure (at least analytically) as a cotangent bundle in a neighborhood of its zero section. However in the equivariant setting the situation is more delicate. Nevertheless we shall show that some important invariants of a Hamiltonian symplectic variety with an invariant Lagrangian subvariety coincide with those of the cotangent bundle over this subvariety. The main argument is deformation to the normal bundle of the subvariety.

Now we describe the results of this paper in more details.

In the theory of reductive group actions on algebraic varieties, there are several numerical invariants, which control, in particular, the harmonic analysis and equivariant compactification theory of a $G$-variety. The \emph{complexity} of an irreducible $G$-variety $X$ is the codimension of a general orbit of a Borel subgroup $B\subset G$. The set of eigenweights of all $B$-semi-invariant rational functions on $X$ forms a lattice, which is called the \emph{weight lattice} of~$X$. Its rank is called the \emph{rank} of~$X$. Now suppose that $X$ is smooth and consider a smooth $G$-subvariety $Y\subset X$. Note that the conormal bundle of $Y$ is an invariant Lagrangian subvariety in~$T^*X$. Panyushev \cite{conormal} proved that the complexities and ranks of the normal and conormal bundles of $Y$ are equal to those of~$X$. His proof was based on his theory of doubled actions and advanced technique of modern invariant theory (Luna's {\'e}tale slice theorem). We give a short elementary proof of his result using deformation to the normal bundle.

It is natural to try to generalize these results to invariant Lagrangian subvarieties other than conormal bundles. In cotangent bundles, the general case is easily reduced to the case of conormal bundles. But for arbitrary Hamiltonian $G$-varieties even the formulation of such a generalization is not obvious (since there is no~$X$!). However it was noticed by Vinberg \cite{comm&coiso} that the complexity and rank of $X$ can be expressed in terms of equivariant symplectic geometry of~$T^*X$. Namely the doubled complexity of $X$ equals the \emph{corank} of~$T^*X$ (which is the rank of the symplectic form restricted to the skew-orthocomplement of the tangent space of a general $G$-orbit) and the rank of $X$ equals the \emph{defect} of~$T^*X$ (which is the dimension of the kernel of the symplectic form on the tangent space of a general orbit). Thus one may conjecture that the complexity and rank of an invariant Lagrangian subvariety $S$ in a Hamiltonian $G$-variety $M$ are equal to the half of the corank and to the defect of~$M$, respectively. We prove this conjecture and even more: we show that (the closures of) the images of the moment maps for $M$ and $T^*S$ coincide. (It is well-known, due to Knop \cite{Weyl&mom} and Vinberg~\cite{comm&coiso}, that the above numerical invariants can be extracted from the image of the moment map.)

Finally, we extend the above results to invariant coisotropic subvarieties with invariant degeneration leaves.

We also prove that a subvariety in the zero fiber of the moment map for an affine Hamiltonian variety which consists of points whose orbit closures intersect a given invariant isotropic subvariety (a kind of nullcone) is isotropic.

\subsection*{Acknowledgements}

We are grateful to A.~V.~Petukhov for attracting our attention to the results of Panyushev. The formulation of Proposition~\ref{nullcone} in the special case of a cotangent bundle and its zero section is due to him. We thank S.~O.~Gorchinsky for useful discussions which lead to the formulation and proof of Lemma~\ref{sec.cone}. We express our gratitude to E.~B.~Vinberg for helpful comments on the preliminary version of the paper.

\subsection*{Notation and conventions}

We shall work over an algebraically closed ground field $\kk$ of characteristic zero. Algebraic varieties are assumed to be irreducible. $\kk[X]=\Ho^0(X,\Oo_X)$ denotes the algebra of regular functions on a variety $X$ and $\kk(X)$ is the field of rational functions. For smooth~$X$, $TX$ and $T^*X$ denote the tangent and cotangent bundle of~$X$, respectively.

Algebraic groups will be denoted by capital Latin letters and their Lie algebras by the respective lowercase Gothic letters. The identity component of an algebraic group $H$ is denoted by~$H^{\circ}$. Throughout the paper, $G$~denotes a connected reductive algebraic group. We denote by $B$ a Borel subgroup in~$G$, by $T$ a maximal torus in~$B$, and by $U$ the unipotent radical of~$B$. A parabolic subgroup containing $B$ is usually denoted by~$P$, with the standard Levi subgroup $L\supset T$ and the unipotent radical~$\Ru{P}$, and $P^{-}$ is the opposite parabolic intersecting $P$ in~$L$.

The Weyl group $W=N_G(T)/T$ is generated by simple root reflections corresponding to the choice of~$B\supset T$. Let $w_0$ denote the longest element of $W$ with respect to this set of generators. $W$~acts on the weights of $T$ and $\lambda\mapsto\lambda^*=-w_0\lambda$ is a linear involution of the weight lattice. If $\lambda$ is the highest weight of a simple $G$-module, then $\lambda^*$ is the highest weight of the dual module.

\section{Normal and conormal bundles}

\subsection{}

Let $X$ be a $G$-variety. Recall the following definitions.

\begin{definition}
The \emph{complexity} $c(X)$ is the codimension of general $B$-orbits in~$X$. It can also be defined as the minimal codimension of $B$-orbits in $X$ and, by the Rosenlicht theorem \cite[2.3]{inv}, coincides with the transcendence degree (over~$\kk$) of the field $\kk(X)^B$ of $B$-invariant rational functions.

The \emph{weight lattice} of $X$ is the set $\WL{X}$ of eigenweights of all (nonzero) $B$-semi-invariant rational functions on~$X$. It is a sublattice in the character lattice of~$B$ (or of~$T$).

The \emph{rank} of $X$ is $r(X)=\rk\WL{X}$.

\end{definition}

These invariants play an important role in the equivariant geometry of $X$ and related representation theory, see e.g.~\cite{hom&emb}.

\subsection{}\label{LST&N*}

For a smooth variety $X$ and a smooth (locally closed) subvariety $Y\subset X$, denote by $N(X/Y)$ and $N^*(X/Y)$ the normal and conormal bundle of $Y$ in~$X$, respectively. In \cite[\S2]{conormal} Panyushev proved the following

\begin{theorem}\label{norm&conorm}
Let $X$ be a smooth $G$-variety and $Y\subset X$ be a smooth $G$-subvariety. Put $N=N(X/Y)$ and $N^*=N^*(X/Y)$. Then
\begin{align*}
    c(X)&=c(N)=c(N^*), \\
    r(X)&=r(N)=r(N^*), \\
    \WL{X}&=\WL{N},
\end{align*}
while $\WL{N^*}$ is obtained from $\WL{N}$ by a linear transformation of the character lattice.
\end{theorem}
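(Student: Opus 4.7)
My plan is to employ the classical deformation to the normal bundle. I would construct the flat $G$-equivariant family $\pi\colon\mathcal{X}\to\AAA^1$ by blowing up $X\times\AAA^1$ along $Y\times\{0\}$ and removing the strict transform of $X\times\{0\}$, with $G$ acting trivially on the $\AAA^1$-factor. Then $\pi^{-1}(t)\cong X$ for $t\ne 0$ and $\pi^{-1}(0)=N$, while $\mathcal{X}\setminus N$ is $G$-equivariantly isomorphic to $X\times(\AAA^1\setminus\{0\})$. Consequently $\kk(\mathcal{X})=\kk(X)(t)$ with $B$ acting trivially on $t$, which yields $\WL{\mathcal{X}}=\WL{X}$, $c(\mathcal{X})=c(X)+1$, and $r(\mathcal{X})=r(X)$.

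Next I would establish $\WL{X}=\WL{N}$. For the inclusion $\WL{X}\subseteq\WL{N}$: pull back $f\in\kk(X)^{(B)}_\lambda$ to $\mathcal{X}$ and multiply by $t^{-\nu_N(f)}$, where $\nu_N$ is the $G$-invariant valuation along $N$; this produces a $B$-semi-invariant of weight $\lambda$ that is regular and non-vanishing on $N$, whose restriction lies in $\kk(N)^{(B)}_\lambda$. Equivalently this is the passage to the leading term in the $I_Y$-adic filtration of $\kk[X]$, whose associated graded ring is $\kk[N]$. For the reverse inclusion, I would lift $g\in\kk(N)^{(B)}_\lambda$ to an element $\tilde g\in(\Oo_{\mathcal{X},N})^{(B)}_\lambda$ using a $T$-equivariant Cohen splitting of the residue map (available by linear reductivity of $T$) together with a $U$-invariance argument exploiting the commuting $\kk^*$-action on $\mathcal{X}$ that scales $\AAA^1$; restricting $\tilde g$ to the generic fiber then produces the required element of $\kk(X)^{(B)}_\lambda$.

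The equality $c(X)=c(N)$ then follows from Rosenlicht's theorem $c+r=\operatorname{tr.deg}_\kk\kk(\cdot)^U$: since $r(X)=r(N)$ has just been established, it suffices to show $\operatorname{tr.deg}\kk(X)^U=\operatorname{tr.deg}\kk(N)^U$, which the same lifting construction delivers when applied to the whole algebra of $U$-invariants rather than a single weight space. Finally, to compare $N$ with $N^*$ I would use that they are dual $G$-equivariant vector bundles over $Y$, related by a fiberwise duality equivariant with respect to the anti-automorphism $g\mapsto g^{-1}$ of $G$. This anti-automorphism carries $B$ to the opposite Borel $B^-$; composing with an inner conjugation returning to $B$ transforms $T$-weights by the involution $\lambda\mapsto-w_0\lambda=\lambda^*$, giving a linear isomorphism $\WL{N}\cong\WL{N^*}$ along with $r(N^*)=r(N)$ and $c(N^*)=c(N)$.

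The hardest step will be securing $U$-invariance of the lift $\tilde g$ from $N$ to $\mathcal{X}$: as $U$ is unipotent (not reductive), standard averaging is unavailable, and the argument must exploit the commuting $\kk^*$-grading to reduce $U$-invariance to finite-dimensional problems on graded pieces where it can be resolved directly.
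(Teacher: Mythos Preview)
Your deformation-to-the-normal-bundle setup is the right geometric picture, and in fact the paper uses exactly this construction elsewhere. But two steps in your argument are not secured.

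\textbf{The reverse inclusion $\WL{N}\subseteq\WL{X}$.} You try to lift a $B$-eigen\-function from $N$ to $\mathcal{X}$ via a $T$-equivariant Cohen-type section of $\Oo_{\mathcal{X},N}\twoheadrightarrow\kk(N)$ and then argue $U$-invariance using the $\kk^{\times}$-action. The difficulty you flag is real: $\Oo_{\mathcal{X},N}$ is a DVR, not a graded ring, so there are no ``finite-dimensional graded pieces'' on which to resolve $U$-invariance; the $\kk^{\times}$-action moves the generic point of $N$ and does not grade the local ring. The paper sidesteps this entirely. It first applies the Local Structure Theorem to reduce to the case where $X$ is \emph{affine} (and $G$ is replaced by a Levi $L$). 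Then $R=\kk[X]$ carries the $I_Y$-adic filtration with $\gr R\simeq\kk[N]$, and---this is the whole point---complete reducibility of $G$-modules gives $R\simeq\gr R$ as $G$-modules. Since for an affine variety $c$, $r$ and $\WL{\cdot}$ are read off the $G$-module structure of the coordinate ring alone, the equalities for $N$ follow in one line. Your lifting problem dissolves once you allow yourself the LST reduction and invoke reductivity of $G$ on the coordinate ring rather than on a local ring.

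\textbf{The passage from $N$ to $N^*$.} The anti-automorphism $g\mapsto g^{-1}$ sends $B$ to itself, not to $B^-$; and even if you use a Cartan-type involution $\theta$ with $\theta(B)=B^-$, there is no natural variety isomorphism $E\to E^*$ intertwining the $G$-action with the $\theta$-twisted one---you would need $Y\simeq Y^{\theta}$ together with compatible identifications of fibers, none of which is given. The paper's route is again via the Local Structure Theorem: after reducing so that a general point $z\in Y$ is fixed by a reductive subgroup $L_0$, the comparison of $E$ and $E^*$ reduces to comparing the $L_0$-modules $E_z$ and $E_z^*$; there the relation $\WL{E_z^*}=\WL{E_z}^*$ is the standard duality of highest weights, and the base contribution $\WL{Z}$ is handled by an exact sequence. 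This is Lemma~\ref{c&r(E*)} in the paper, and it is where the ``linear transformation of the character lattice'' (namely $\lambda\mapsto -w_0\lambda$ for the Weyl group of $L$) actually comes from.
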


We shall give a short elementary proof of this theorem. First, we reduce everything to the affine case. This reduction is based on a well-known theorem, going back to Brion--Luna--Vust and Grosshans, see e.g.\ \cite[4.2]{hom&emb}:

\begin{LST}
Given a normal $G$-variety $X$ with a $G$-stable subvariety $Y$, let $P\supset B$ be the normalizer of a general $B$-orbit in~$Y$, with the standard Levi decomposition $P=\Ru{P}\leftthreetimes L$. There exist a $P$-stable affine open subvariety $\Xo\subset X$ and an $L$-stable closed subvariety $Z\subset\Xo$ such that:
\begin{enumerate}
  \item $\Xo=PZ \simeq \Ru{P}\times Z$, where $\Ru{P}$ acts on the first factor of the r.h.s.\ by left multiplication, and $L$ acts on the first factor by conjugation and on the second factor in the natural way;
  \item $\Yo=Y\cap\Xo \simeq \Ru{P}\times(Y\cap Z)$ is nonempty and closed in~$\Xo$;
  \item the $L$-action on $Y\cap Z$ amounts to a free action of a torus $A=L/L_0$, where $L \supset L_0 \supset [L,L]$.
\end{enumerate}
\end{LST}

\begin{remark}\label{loc.str.gen}
In the particular case $Y=X$ we immediately see that $\WL{X}$ is the character lattice of~$A$, $r(X)=\dim A$, and $c(X)=\dim Z-\dim A=\dim X-\dim\Ru{P}-\dim A$.
\end{remark}

If $X$ and $Y$ are smooth, then $N(\Xo/\Yo)\simeq\Ru{P}\times N(Z/Y\cap Z)$ and $N^*(\Xo/\Yo)\simeq\Ru{P}\times N^*(Z/Y\cap Z)$. Thus replacing $G$ with~$L$, $X$ with~$Z$, and $Y$ with~$Y\cap Z$ in Theorem~\ref{norm&conorm} preserves complexities, ranks, and weight lattices. We may now assume that $X$ is affine and $Y$ is closed in~$X$.

Let $I\vartriangleleft R=\kk[X]$ be the ideal defining~$Y$. There is a $G$-stable descending filtration of $R$ by the powers of~$I$. Let $\gr R=\bigoplus_{n=0}^{\infty}I^n/I^{n+1}$ denote the associated graded algebra. Note that
\begin{equation*}
    \gr R \simeq \Sym^{\bullet}_{R/I}(I/I^2) \simeq \kk[N]
\end{equation*}
\cite[II.8]{AG} and, by complete reducibility of $G$-modules, $\gr R$ is isomorphic to $R$ as a $G$-module.

However, for an affine variety~$X$, the complexity, rank, and weight lattice are read off the $G$-module structure of~$\kk[X]$. Namely, in the notation of Remark~\ref{loc.str.gen}, $P$~is the common stabilizer in $G$ of all $B$-stable lines in~$R$, $L_0$~is the common stabilizer in $L$ of all $B$-eigenvectors in~$R$, and $\WL{X}$ is spanned by their eigenweights \cite[II.3.6]{comm&coiso}. This observation proves Theorem~\ref{norm&conorm} for normal bundles.

As for conormal bundles, we make use of the following lemma, which may be of independent interest.

\begin{lemma}\label{c&r(E*)}
Let $E\to Y$ be a $G$-vector bundle over a $G$-variety $Y$ and $E^*$ be the dual bundle. Then $c(E^*)=c(E)$, $r(E^*)=r(E)$, and $\WL{E^*}=\WL{E}^*$, where the involution ${}^*$ is defined as in the Introduction by the longest element of the Weyl group of $T$ in~$L$, in the notation of the Local Structure Theorem for $X=Y$.
\end{lemma}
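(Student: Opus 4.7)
The plan is to reduce, via the Local Structure Theorem, to a purely representation-theoretic statement about a linear $L_0$-module and its dual.

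First, apply the LST to $Y$ with $X=Y$, giving $\Yo = \Ru P \times Z$ with $L_0$ acting trivially on $Z$ and $A = L/L_0$ acting freely. Since $\Ru P$ is normal in $B$ with $B/\Ru P = B_L := B \cap L$, and $\Ru P$ acts freely on the first factor of $\Ru P \times E|_Z$, one has $\kk(E|_\Yo)^B = \kk(E|_Z)^{B_L}$, so the $G$-invariants of $E$ coincide with the $L$-invariants of $E|_Z$. The bundle $E|_Z$ is $L_0$-equivariant over a trivially acted base, hence decomposes into $L_0$-isotypical components $V_\mu \otimes M_\mu$ with each $M_\mu$ a vector bundle on $Z$; shrinking $Z$ to a nonempty open subset over which every $M_\mu$ trivializes (permitted since the invariants of interest are birational), we obtain $E|_Z \simeq Z \times V$ as $L_0$-equivariant bundles for some $L_0$-module $V$. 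After a change of trivialization the $T$-action may be arranged to factor through a representation $T \to \mathrm{GL}(V)$ extending the $L_0$-structure on $T_0 = T \cap L_0$.

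Next, with $E \simeq \Ru P \times Z \times V$ birationally, a general $B$-orbit splits as a product ($\Ru P$ free on the first factor, $A$ free on $Z$, and $U\cap L \subset [L,L] \subset L_0$ acting only on $V$), and a direct computation yields
\begin{align*}
c(E) &= c(Y) + c_{L_0}(V), \\
r(E) &= r(Y) + r_{L_0}(V), \\
\WL{E} &= \pi^{-1}(\Lambda_{L_0}(V)),
\end{align*}
where $\pi \colon \Lambda(T) \twoheadrightarrow \Lambda(T_0)$ is the restriction (with kernel $\Lambda(A) = \WL Y$) and $\Lambda_{L_0}(V)$ is the $L_0$-weight lattice of $V$. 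The analogous formulas for $E^*$ replace $V$ by $V^*$. Because $[L,L] \subset L_0$, the Weyl groups coincide, $W(L_0) = W(L)$, so the ${}^*$-involution on $\Lambda(T)$ commutes with $\pi$, giving $(\WL{E})^* = \pi^{-1}(\Lambda_{L_0}(V)^*)$. Matching this with $\WL{E^*} = \pi^{-1}(\Lambda_{L_0}(V^*))$ reduces the lemma to the claims $c_{L_0}(V) = c_{L_0}(V^*)$, $r_{L_0}(V) = r_{L_0}(V^*)$, and $\Lambda_{L_0}(V^*) = \Lambda_{L_0}(V)^*$ for a linear representation $V$ of the reductive group~$L_0$.

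Finally, to dispose of the linear case, apply Chevalley duality: $\Sym^n V^* \simeq (\Sym^n V)^*$ as $L_0$-modules and $V_\lambda^* \simeq V_{\lambda^*}$, so the $L_0$-module multiplicity of $V_\lambda$ in $\kk[V]$ equals that of $V_{\lambda^*}$ in $\kk[V^*]$. This immediately gives $\Lambda_{L_0}(V^*) = \Lambda_{L_0}(V)^*$ and equality of ranks. The Hilbert series of $\kk[V]^U$ and $\kk[V^*]^U$ then coincide (they differ only by the reindexing $\lambda \leftrightarrow \lambda^*$), forcing equality of their Krull dimensions $c + r$, which combined with the rank equality yields $c(V) = c(V^*)$. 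The main technical obstacle will be the equivariant trivialization step; once it is in place, the computations are essentially formal.
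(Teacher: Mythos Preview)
Your proposal is essentially correct and follows the same overall strategy as the paper: reduce via the Local Structure Theorem to an $L$-action on $E|_Z$, then isolate a linear $L_0$-representation and invoke highest-weight duality $V_\lambda^*\simeq V_{\lambda^*}$ for the fiber.

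The chief difference is in how the linear problem is extracted. You attempt to trivialize the entire bundle $E|_Z\simeq Z\times V$ equivariantly, first for $L_0$ (which is fine, since $L_0$ acts trivially on $Z$) and then for $T$. The paper avoids this altogether: it simply picks a general point $z\in Z$, observes that $(B\cap L)v$ is fibered over $Lz\simeq A$ with fiber $(B\cap L_0)v$ for general $v\in E_z$, and obtains directly
\[
c(E)=c(Z)+c(E_z),\qquad r(E)=r(Z)+r(E_z),
\]
together with a short exact sequence $0\to\WL{Z}\to\WL{E}\to\WL{E_z}\to0$ (your formula $\WL{E}=\pi^{-1}(\Lambda_{L_0}(V))$ is the same statement). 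The advantage of the paper's single-fiber approach is that the step you flag as ``the main technical obstacle''---arranging the $T$-action to factor through a linear representation on $V$---never arises. Your step can be justified (after shrinking, $Z$ is an $A$-torsor, so the $A$-equivariant bundle descends and one can linearize), but it is extra work that the fiberwise argument renders unnecessary. The paper also notes explicitly that $L_0$ may be disconnected (a connected reductive group times a finite abelian group), so that the weight ``lattice'' of $E_z$ is a priori only a finitely generated abelian group; your discussion implicitly treats $L_0$ as connected, which is harmless for the conclusion but worth flagging.
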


\begin{proof}
From the Local Structure Theorem (for $X=Y$) it is clear that the complexities, ranks, and weight lattices of $E$ and $E^*$ do not change if we pass from $G$ to~$L$, from $Y$ to~$Z$, and from $E$ to~$E|_Z$. Indeed, $E|_{\Yo}\simeq\Ru{P}\times E|_Z$ and $E^*|_{\Yo}\simeq\Ru{P}\times E^*|_Z$ (as $P$-varieties).

Take a general point $z\in Z$. The group $L_0$ fixes $z$ and acts on the fiber $E_z$ over~$z$. This group may be disconnected, but it is a direct product of a connected reductive group and a finite Abelian group. The highest weight theory for representations and the notions of complexity, rank, etc extend to such groups word by word, except that the ``weight lattice'' may be no longer a lattice, but a finitely generated Abelian group.

We claim that $c(E)=c(Z)+c(E_z)$, $r(E)=r(Z)+r(E_z)$, and there is an exact sequence
\begin{equation*}
    0 \longrightarrow \WL{Z} \longrightarrow \WL{E} \longrightarrow \WL{E_z} \longrightarrow 0,
\end{equation*}
where the complexity, rank, and weight group of $E_z$ are computed for the action of~$L_0$, cf.~\cite[Thm.\,9.4]{hom&emb}. (A similar assertion holds, of course, for~$E^*$.) Indeed, take a general point $v\in E_z$. The orbit $(B\cap L)v$ is fibered over $(B\cap L)z=Lz \simeq L/L_0=A$ with the fiber $(B\cap L_0)v$. Hence $c(E)=\codim_E(B\cap L)v= \codim_ZLz+\codim_{E_z}(B\cap L_0)v=c(Z)+c(E_z)$. The weight lattice $\WL{E|_Z}$ consists of the eigenweights of all $(B\cap L)$-semi-invariant functions on $(B\cap L)v$, and similarly for~$\WL{E_z}$. As $(B\cap L)/(B\cap L_0)=A$ is a torus, every $(B\cap L_0)$-semi-invariant function on $(B\cap L_0)v$ extends to a $(B\cap L)$-semi-invariant function on $(B\cap L)v$. Therefore the restriction of functions yields the above exact sequence and the equality on ranks holds.

Since $\kk[E^*_z]$ is the dual $L_0$-module of~$\kk[E_z]$, we have $c(E_z^*)=c(E_z)$, $r(E_z^*)=r(E_z)$, and $\WL{E_z^*}=\WL{E_z}^*$. In view of Remark~\ref{loc.str.gen}, the involution ${}^*$ acts on $\WL{Z}$ by inversion, which completes the proof.
\end{proof}

Applying Lemma~\ref{c&r(E*)} to $E=N$ concludes the proof of Theorem~\ref{norm&conorm}.

\begin{remark}
In the course of the proof, we have seen in fact that the stabilizers of general position for the actions of $B$ on $X$ and $N(X/Y)$ coincide and those on $N(X/Y)$ and $N^*(X/Y)$ (or, more generally, on $E$ and~$E^*$) are conjugate by a Weyl involution of~$L$. Indeed, by the Local Structure Theorem and Remark~\ref{loc.str.gen}, these stabilizers are determined by the $G$-action on~$Y$ (which determines $P$ and~$L$) and by the weight lattices (which determine the stabilizers of general positions for the actions of $B\cap L$ on $Z$ and on the vector bundles over $Y\cap Z$).
\end{remark}

\section{Lagrangian subvarieties}

\subsection{}\label{Lagr<cotg}

Recall the canonical structure of a symplectic variety on the cotangent bundle. There is a 1-form $\ell$ on $T^*X$ given by the formula $\langle\ell(p),\nu\rangle=\langle p,d\pi(\nu)\rangle$, $\forall p\in T^*X$, $\nu\in T_p(T^*X)$, where $\pi:T^*X\to X$ is the canonical projection. The 2-form $\omega=-d\ell$ is closed, nondegenerate, and thus endows $T^*X$ with a structure of a symplectic variety. In local coordinates $x_1,\dots,x_n$ on $X$ and dual coordinates $y_1,\dots,y_n$ in the cotangent spaces, $\ell=\sum y_idx_i$ and $\omega=\sum dx_i\wedge dy_i$. If $G$ acts on~$X$, then the induced $G$-action on $T^*X$ preserves the symplectic structure.

As noted above, $N^*=N^*(X/Y)$ is a Lagrangian subvariety in~$T^*X$, which is $G$-stable provided $Y$ is so. In fact, any conical Lagrangian subvariety in $T^*X$ is a conormal bundle, or at least shares a common open subset with the latter. (A subvariety in $T^*X$ is said to be conical if it is stable under the $\kk^{\times}$-action on $T^*X$ by dilatations in each cotangent space.) In \cite{conormal} Panyushev posed a question whether Theorem~\ref{norm&conorm} can be generalized to arbitrary invariant Lagrangian subvarieties in~$T^*X$. We prove such a generalization here (Corollary~\ref{Lagr<T*X}). In order to do this, we need a better understanding of the structure of Lagrangian subvarieties in cotangent bundles.

\begin{proposition}
Let $S\subset T^*X$ be a Lagrangian subvariety. There exists a smooth subvariety $Y\subset X$ and a Lagrangian subvariety $C\subset T^*Y$ covering $Y$ under the projection $T^*Y\to Y$ such that $S$ shares a common open subset with~$\rho^{-1}(C)$, where $\rho:T^*X|_Y\to T^*Y$ is the restriction map.
\end{proposition}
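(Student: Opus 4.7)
The plan is to take $Y$ as a smooth open subvariety of $\overline{\pi(S)}\subset X$ and to obtain $C$ as the symplectic reduction of $S$ through the natural coisotropic subvariety $T^*X|_Y \subset T^*X$, whose reduction map is precisely~$\rho$. First I would set $Y_0 := \overline{\pi(S)}$, pick a smooth open subvariety $Y\subset Y_0$, and replace $S$ by a smooth dense open subset of $S\cap\pi^{-1}(Y)$. After this shrinking, $S\subset T^*X|_Y$ and $\pi|_S\colon S\to Y$ is dominant with generic fibers of dimension $d:=\dim X-\dim Y$. In adapted local coordinates $x_1,\dots,x_n$ on $X$ with $Y=\{x_{k+1}=\cdots=x_n=0\}$, the restriction of the canonical form $\omega=\sum dx_i\wedge dy_i$ to $T^*X|_Y$ equals $\rho^*\omega_Y=\sum_{i\le k}dx_i\wedge dy_i$, so $T^*X|_Y$ is coisotropic in $T^*X$ with null foliation given by the fibers of $\rho$.

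The key step is a skew-orthogonality argument. At any smooth $s\in S$, since $T_sS$ is Lagrangian in $T_s T^*X$ and $T_sS\subset T_s(T^*X|_Y)$, taking skew-orthogonals gives
\[
T_sS=(T_sS)^{\perp}\supset\bigl(T_s(T^*X|_Y)\bigr)^{\perp},
\]
and the right-hand side is exactly the kernel of $\omega|_{T^*X|_Y}$ at $s$, i.e., the tangent to the fiber of $\rho$ through~$s$. Hence the kernel of $d\rho|_{T_sS}$ has full dimension $d$, so $C:=\overline{\rho(S)}\subset T^*Y$ has dimension $\dim Y$; and since $\omega_X|_{T_sS}=0$ while $\omega_X|_{T^*X|_Y}=\rho^*\omega_Y$, we obtain $\omega_Y|_{d\rho(T_sS)}=0$. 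Thus $C$ is Lagrangian in $T^*Y$, and it covers~$Y$ because $\pi|_S$ is dominant onto~$Y$.

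To conclude, observe that $\rho$ is a vector-bundle type projection with irreducible fibers (copies of $N^*(X/Y)_y$), so $\rho^{-1}(C)$ is irreducible of dimension $\dim C+d=\dim X$. Since a generic fiber of $\rho|_S$ is $d$-dimensional, it is open and dense in the ambient $d$-dimensional fiber of $\rho$; hence $S\subset\rho^{-1}(C)$, and equality of dimensions forces $\overline{S}=\rho^{-1}(C)$, so $S$ and $\rho^{-1}(C)$ share a dense open subset. The main obstacle is really the skew-orthogonality step — this is where the Lagrangian hypothesis does all the conceptual work, forcing $S$ to be saturated by conormal directions along~$Y$; everything else reduces to generic-smoothness shrinkings and dimension counts.
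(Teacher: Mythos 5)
Your proposal is correct and follows essentially the same route as the paper: after the same generic shrinking over $Y=\overline{\pi(S)}$, your skew-orthogonality step (coisotropy of $T^*X|_Y$ with null foliation the $\rho$-fibers, so $T_sS=(T_sS)^{\sort}\supset\Ker d\rho$) is just a repackaging of the paper's computation $\omega(\nu,q)=\langle d\pi(\nu),q\rangle=0$ forcing $T_pS\cap T^*_yX=N^*_y$, and both proofs then use $\omega|_{T^*X|_Y}=\rho^*\omega_{T^*Y}$ to make $C$ Lagrangian. Your endgame via irreducibility of $\rho^{-1}(C)$ and a dimension count is a harmless variant of the paper's fiberwise description of $S\cap T^*_yX$ as open in finitely many translates $p_i+N^*_y$.
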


\begin{proof}
Put $Y=\pi(S)$. Shrinking $S$ (by passing to an open subset) we may assume that $Y$ is a smooth subvariety and $\pi:S\to Y$ is submersive. Take any $p\in S$ and put $y=\pi(p)$. We have the following exact sequences:
\begin{equation*}
\begin{array}{ccccccccc}
    0 &\longrightarrow&      T^*_yX      &\longrightarrow& T_p(T^*X) &\longrightarrow& T_yX &\longrightarrow& 0 \\[0.5ex]
      &               &     \bigcup      &               &  \bigcup  &              &\bigcup&               &   \\[0.5ex]
    0 &\longrightarrow& T_pS \cap T^*_yX &\longrightarrow&   T_pS    &\longrightarrow& T_yY &\longrightarrow& 0\rlap. \\
\end{array}
\end{equation*}
For any $\nu\in T_pS$ and $q\in T_pS\cap T^*_yX$ we have $\omega(\nu,q)=\langle d\pi(\nu),q\rangle=0$, whence $q\in N^*_y$. As $\dim S=\dim X$, we have $T_pS\cap T^*_yX=N^*_y$. Hence $S\cap T^*_yX$ is an open subset in a finite union $\bigcup_i(p_i+N^*_y)$ of translates of the fiber of the conormal bundle. This observation proves, in particular, the above claim about conical Lagrangian subvarieties.

The translates $p_i+N^*_y$ are fibers of $\rho$ and $C=\rho(S)$ is a smooth subvariety in $T^*Y$ covering~$Y$. Since the restriction of $\omega$ to $T^*X|_Y$ is the pullback of the symplectic form on~$T^*Y$ and $S$ is Lagrangian, $C$~is a Lagrangian subvariety in~$T^*Y$.
\end{proof}

\begin{corollary}\label{Lagr<T*X}
Suppose that $S\subset T^*X$ is a $G$-stable Lagrangian subvariety. Then $c(S)=c(X)$ and $r(S)=r(X)$.
\end{corollary}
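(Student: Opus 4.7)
The plan is to reduce, via the preceding proposition, to the conormal bundle case and then invoke Theorem~\ref{norm&conorm}.

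First, by the $G$-equivariance of the constructions in the proof of the proposition, $Y=\pi(S)\subset X$ is $G$-stable, and after replacing $S$ by a $G$-stable open subset we may assume $Y$ is smooth; moreover $C=\rho(S)\subset T^*Y$ is a $G$-stable Lagrangian subvariety covering~$Y$, so $\pi_Y\colon C\to Y$ is surjective with $\dim C=\dim Y$, hence generically finite. Since $S$ shares an open subset with $\rho^{-1}(C)$ and complexity and rank are birational invariants, $c(S)=c(\rho^{-1}(C))$ and $r(S)=r(\rho^{-1}(C))$. Theorem~\ref{norm&conorm} applied to $Y\subset X$ yields $c(N^*(X/Y))=c(X)$ and $r(N^*(X/Y))=r(X)$, so it remains to prove $c(\rho^{-1}(C))=c(N^*(X/Y))$ and the corresponding equality for ranks.

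For this, I would view $\rho^{-1}(C)\to Y$ (via $C$) as a $G$-equivariant fibration and compute its invariants along the lines of Lemma~\ref{c&r(E*)}. Apply the Local Structure Theorem to~$Y$ to obtain a Levi subgroup $L$ and generic stabilizer $L_0\supseteq[L,L]$ acting on the conormal fiber $N^*_z$ at a general $z\in Z$. As in the proof of the preceding proposition, the fiber of $\rho^{-1}(C)\to Y$ over~$z$ is a finite disjoint union $\bigsqcup_i(p_i+N^*_z)$ with $p_i\in T^*_zX$ mapping to $C\cap T^*_zY$; since $L_0$ permutes this finite set of images, the identity component $L_0^{\circ}$ fixes each~$\rho(p_i)$, and one may choose the lifts $p_i$ to be $L_0^{\circ}$-fixed as well. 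Translation by~$p_i$ then provides an $L_0^{\circ}$-module isomorphism $N^*_z\simeq p_i+N^*_z$, so the $L_0^{\circ}$-orbit structure on each component of the fiber matches that of $N^*_z$ itself. Extending the computation of Lemma~\ref{c&r(E*)} to this setting gives $c(\rho^{-1}(C))=c(Y)+c_{L_0}(N^*_z)$, which by Lemma~\ref{c&r(E*)} itself equals $c(N^*(X/Y))$; the rank equality follows by the same scheme applied to the eigenweights of $B$-semi-invariant rational functions.

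The main obstacle will be this extension of Lemma~\ref{c&r(E*)} from $G$-vector bundles to $G$-fibrations whose generic fiber is a disjoint union of affine subspaces translating a common vector space. The argument should mirror the original: reduce via LST to the affine setting over~$Z$, and decompose the general $B$-orbit into a component along~$Y$ (contributing dimension $\dim Y-c(Y)$) and a transverse component in the fiber (contributing $\dim N^*_z-c_{L_0}(N^*_z)$). Disconnectedness of the fiber affects only the finite quotient $L_0/L_0^{\circ}$, which influences neither orbit dimensions nor weight lattices modulo torsion, so the resulting additivity formula is the same as for a vector bundle.
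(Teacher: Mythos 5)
Your proposal is correct and follows essentially the same route as the paper: reduce to the structure of $S$ as (an open piece of) $\rho^{-1}(C)$, apply the fibration additivity of complexity and rank as in Lemma~\ref{c&r(E*)}, use connectedness of $L_0^{\circ}$ to fix the finite set $C\cap T^*_zY$ pointwise and translate each component $p_i+N^*_z$ onto $N^*_z$, and conclude via Theorem~\ref{norm&conorm}. The only step you leave implicit --- that the lift $p_i$ can be chosen $L_0^{\circ}$-fixed --- is exactly the paper's appeal to complete reducibility of $L_0^{\circ}$-modules, so no genuine gap remains.
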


\begin{proof}
Arguing as in the proof of Lemma~\ref{c&r(E*)}, we see that, in the notation of Theorem~\ref{norm&conorm}, $c(N^*)=c(Y)+c(N^*_y)$, $r(N^*)=r(Y)+r(N^*_y)$ for general $y\in Y\cap Z$ and $c(S)=c(Y)+c(p+N^*_y)$, $r(S)=r(Y)+r({p+N^*_y})$ for $p\in\rho^{-1}(C\cap T^*_yY)$, where the complexities and ranks of $N^*_y$ and $p+N^*_y$ are computed for the action of~$L_0^{\circ}$. Since $L_0^{\circ}$ is connected, it fixes $C\cap T^*_yY$ pointwise, and complete reducibility of $L_0^{\circ}$-modules implies that $N^*_y$ and $p+N^*_y$ are isomorphic $L_0^{\circ}$-varieties, whence the claim.
\end{proof}

\subsection{}

It is well-known that the $G$-action on $M=T^*X$ is Hamiltonian. This means that there exists a $G$-equivariant \emph{moment map} $\Phi:M\to\g^*$ such that for any $\xi\in\g$, considered as a linear function on~$\g^*$, the skew gradient of its pullback $\Phi^*\xi\in\kk[M]$ is the velocity field~$\xi_*$. Equivalently, $\langle d_p\Phi(\nu),\xi\rangle=\omega(\xi{p},\nu)$, $\forall p\in M$, $\nu\in T_pM$, $\xi\in\g$, where $\xi{p}$ is the velocity vector of $\xi$ at~$p$. See \cite[II.2]{comm&coiso} for more details. The moment map of the cotangent bundle is given by a formula $\langle\Phi(p),\xi\rangle=\langle p,\xi{x}\rangle$, $\forall x\in X$, $p\in T^*_xX$, $\xi\in\g$.

Is it possible to generalize Theorem~\ref{norm&conorm} and Corollary~\ref{Lagr<T*X} to arbitrary Hamiltonian symplectic varieties instead of cotangent bundles? Even a formulation of such a generalization is not obvious. Indeed, for arbitrary $M$ we have no base variety~$X$, and it is not clear what are the substitutes for $c(X)$ and~$r(X)$. Luckily, it follows from the results of Knop \cite{Weyl&mom} and Vinberg \cite{comm&coiso} that  $c(X)$ and $r(X)$ are in fact symplectic invariants of the Hamiltonian action on~$T^*X$.

Let $M$ be any Hamiltonian $G$-variety.

\begin{definition}
The \emph{corank} of $M$ is $\cork M=\rk\omega|_{(\g{p})^{\sort}}$ and the \emph{defect} of $M$ is $\df M= \dim\Ker\omega|_{(\g{p})^{\sort}}=\dim\g{p}\cap(\g{p})^{\sort}$, where $p\in M$ is a general point. Here $\g{p}=T_pGp$ is the tangent space of an orbit (= the set of all velocity vectors) and ${}^{\sort}$ denotes the skew-orthocomplement.
\end{definition}

It is clear from the above that $\Ker d_p\Phi=(\g{p})^{\sort}$ and $\Im d_p\Phi=(\g_p)^{\ann}$ is the annihilator in $\g^*$ of the isotropy subalgebra at~$p$. It follows that $\dim\overline{\Phi(M)}=\dim Gp$ for general $p\in M$, whence
\begin{gather*}
    \df M   = \dim Gp - \dim G\Phi(p) = \dim\overline{\Phi(M)}/G, \\
    \cork M = \dim M/G - \df M        = \dim M - \dim\overline{\Phi(M)} - \dim\overline{\Phi(M)}/G.
\end{gather*}
(Here the quotient by $G$ means the rational quotient, i.e., the quotient space of an invariant open subset for which a geometric quotient exists.)

\begin{theorem}[{\cite[7.1]{Weyl&mom}, \cite[II.3.4]{comm&coiso}, see also \cite[Thm.\,8.17]{hom&emb}}] \label{cork&def(T*X)}
\begin{equation*}
2c(X)=\cork T^*X,\qquad r(X)=\df T^*X.
\end{equation*}
\end{theorem}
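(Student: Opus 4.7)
The plan is to derive both identities from the Local Structure Theorem applied to $(X,X)$ together with direct dimension counts on the moment map. Applying LST with $Y=X$ yields a parabolic $P = \Ru{P} \rtimes L$, a normal subgroup $L_0 \triangleleft L$ with $A = L/L_0$ a torus acting freely on a closed $L$-stable $Z \subset \Xo$, and an open affine $P$-stable $\Xo \simeq \Ru{P} \times Z$ in $X$. By Remark~\ref{loc.str.gen}, $\dim A = r(X)$ and $\dim Z - \dim A = c(X)$; in particular, $\dim X = \dim\Ru{P} + r(X) + c(X)$.

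For the dimension of $\overline{\Phi(T^*X)}$, I would use the general fact $\Im d_p\Phi = (\g_p)^{\ann}$, which yields $\dim\overline{\Phi(T^*X)} = \dim Gp$ for generic $p$. Take $p = (x, \alpha)$ with $x \in Z$ general and $\alpha \in T^*_x X$ general. Since $L_0 \subseteq G_x$ fixes $Z$ pointwise (as $L$ acts on $Z$ through $A$), and $T^*_x X$ decomposes as a $G_x$-module with $L_0$ trivial on $T^*_x Z$, a careful analysis of how unipotent elements of $G_x$ combine with $L_0$ to stabilize a generic covector gives $\dim G_p = \dim L_0$. Therefore $\dim\overline{\Phi(T^*X)} = \dim G - \dim L_0 = 2\dim\Ru{P} + r(X)$. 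Next, $\df T^*X = \dim\overline{\Phi(T^*X)} - \dim G\Phi(p)$; identifying $\Phi(p)$ via $\g^{*} \simeq \g$ (Killing form) with a generic element $a+e \in \ab + \Ru{\p}$ where $a$ is regular in $\lv$ and $e$ is of Richardson type in $\Ru{\p}$, its centralizer in $\g$ has dimension $\dim L$, giving $\dim G\Phi(p) = 2\dim\Ru{P}$ and hence $\df T^*X = r(X)$.

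The corank then follows arithmetically: $\cork T^*X = \dim T^*X - \dim\overline{\Phi(T^*X)} - \df T^*X = 2(\dim\Ru{P}+r(X)+c(X)) - (2\dim\Ru{P}+r(X)) - r(X) = 2c(X)$. The main technical hurdle lies in the two stabilizer computations $\dim G_p = \dim L_0$ and $\dim G_{\Phi(p)} = \dim L$; both require the interplay between the LST decomposition of $X$ and the Bruhat structure $G = \Ru{P^-} \cdot P$. These can be streamlined by first computing the simpler $P$-moment map $\mu \colon T^*\Xo \to \p^{*}$ explicitly (straightforward given $\Xo \simeq \Ru{P} \times Z$) and then recovering $\Phi|_{T^*\Xo}$ via the restriction $\g^{*} \to \p^{*}$.
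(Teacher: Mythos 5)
Your reduction of the statement to two dimension counts is arithmetically sound: granting $\dim G_p=\dim L_0$ for generic $p\in T^*X$ and $\dim G\Phi(p)=2\dim\Ru{P}$, the formul{\ae} $\df M=\dim\overline{\Phi(M)}-\dim G\Phi(p)$ and $\cork M=\dim M-\dim\overline{\Phi(M)}-\df M$ indeed give $r(X)=\df T^*X$ and $2c(X)=\cork T^*X$. The problem is that these two claims are not auxiliary computations: together they are equivalent to Knop's description of $\overline{\Im\Phi}$, i.e.\ to the substance of the theorem, and your proposal does not prove either. Concretely: for your choice $p=(x,\alpha)$ with $x\in Z$ and $\alpha$ generic in the \emph{whole} fiber $T^*_xX$, the value $\Phi(p)$ does \emph{not} lie in $\ab\oplus\Ru\p$. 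Indeed $\langle\Phi(p),\xi\rangle=\langle\alpha,\xi x\rangle$, and $\Ru\p x$ is a $\dim\Ru{P}$-dimensional subspace of $T_x\Xo\simeq\Ru\p\oplus T_xZ$, so a generic $\alpha$ pairs nontrivially with $\Ru\p$; under $\g^*\simeq\g$ this means $\Phi(p)$ has a nonzero component in $\Ru[-]\p$, whereas $\ab\oplus\Ru\p$ annihilates $\Ru\p$. To land in $\p_0^{\ann}$ one must restrict to covectors annihilating the $U$-orbit (in general $\overline{U}$-orbit) directions, i.e.\ to the variety $\Uu$, and then the real work is to show that $\Phi(\Uu)$ is dense in $\overline\p_0^{\ann}$ and that $G\Uu$ is dense in $T^*X$ --- exactly Theorem~\ref{horosph}, proved in the paper via the one-parameter subgroup $\gamma$, the parabolic $\overline{P}$ (needed when $X$ is not quasiaffine), and the transversality result \cite[5.5]{Ad}. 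Saying that $\Phi(p)$ ``may be identified with a generic element of $\ab+\Ru\p$'' assumes this theorem rather than proving it. Similarly, $\dim G_p=\dim L_0$ cannot be obtained by inspecting how $L_0$ and unipotent elements of $G_x$ stabilize a generic covector: for $X=\PP^1$, $G=\SL_2$ one has $L_0=T$, while the generic stabilizer in $T^*X$ is, up to the center, a unipotent one-parameter subgroup --- equal to $L_0$ in dimension but not conjugate to it; and the dimension equality is equivalent to $\dim\overline{\Im\Phi}=2\dim\Ru{P}+r(X)$, which is again the moment-image theorem.

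Two further points. First, your $a$ lies in $\ab\subset\z(\lv)$, so ``regular in $\lv$'' is not the relevant condition (one needs $\z_{\g}(a)=\z_{\g}(\ab)$), and the fact that a generic element of $\ab\oplus\Ru\p$ has centralizer of dimension $\dim L$ is itself the induced-orbit/Richardson-type statement quoted as \cite[5.5]{Ad}; it can be cited, but not merely asserted. Second, the proposed shortcut via the $P$-moment map on $T^*\Xo\simeq T^*(\Ru{P}\times Z)$ cannot close the gap, because the difficulty sits entirely in the directions transverse to $P$, i.e.\ in the action of $\Ru[-]{\overline{P}}$ and the density of $G\Uu$. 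Note finally that the paper does not reprove Theorem~\ref{cork&def(T*X)} at all --- it cites Knop and Vinberg --- and the machinery it does develop (Theorem~\ref{horosph}) is precisely the ingredient your argument is missing; so either invoke those references for the two stabilizer claims, or reproduce an argument of that type.
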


Now we can formulate our generalization of Corollary~\ref{Lagr<T*X}.

\begin{theorem}\label{c&r(Lagr)}
Let $M$ be a Hamiltonian $G$-variety and $S\subset M$ be a $G$-stable Lagrangian subvariety. Then $2c(S)=\cork M$ and $r(S)=\df M$.
\end{theorem}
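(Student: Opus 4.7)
My plan is to deform $M$ along $S$ to its normal bundle and reduce to the cotangent-bundle statement of Theorem~\ref{cork&def(T*X)}. Since $S$ is Lagrangian, contraction with the symplectic form gives a $G$-equivariant identification $N(M/S)\simeq T^*S$ over the smooth locus of $S$, so Theorem~\ref{cork&def(T*X)} applied to $T^*S$ already delivers $\cork T^*S=2c(S)$ and $\df T^*S=r(S)$. It therefore suffices to prove $\cork M=\cork T^*S$ and $\df M=\df T^*S$; using $\dim M=2\dim S=\dim T^*S$ together with the formulas for corank and defect recalled just before the theorem, this reduces to the two dimension equalities
\begin{equation*}
\dim\overline{\Phi(M)}=\dim\overline{\Phi(T^*S)}\qquad\text{and}\qquad \dim\overline{\Phi(M)}\by G=\dim\overline{\Phi(T^*S)}\by G.
\end{equation*}

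The preparatory step is to verify that $\Phi$ takes a constant $G$-fixed value on $S$: for a smooth point $s\in S$ and $v\in T_sS$, $\xi\in\g$, the identity $\langle d_s\Phi(v),\xi\rangle=\omega(\xi{s},v)$ vanishes because $\xi{s}\in T_sS$ by $G$-invariance while $T_sS$ is Lagrangian. Irreducibility of $S$ then forces $\Phi$ to be constant on $S$, and $G$-equivariance pins this constant value in $(\g^*)^G$; after translating $\Phi$ by this value I assume $\Phi(S)=0$, so that every $\Phi^\xi:=\langle\Phi,\xi\rangle$ lies in the ideal sheaf $\Ii$ of $S$ in $\Oo_M$.

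Next I form the Rees-algebra deformation $\widetilde{M}\to\AAA^1$ to the normal cone of $S$ in $M$, with generic fibre $M$ and special fibre $N(M/S)\simeq T^*S$, and extend the moment map by $\widetilde{\Phi}^\xi:=\Phi^\xi/t$ — a regular function on $\widetilde{M}$ because $\Phi^\xi\in\Ii$. On fibres over $t\ne0$ this is $\Phi/t$, while on the special fibre it is $\Phi_{T^*S}$ (up to a sign coming from the isomorphism $N(M/S)\simeq T^*S$): the class of $\Phi^\xi/t$ modulo $t$ lies in $\Ii/\Ii^2$ and corresponds under this isomorphism to the linear function $(s,p)\mapsto\langle p,\xi{s}\rangle$ on $T^*S$. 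Let $\widetilde{V}\subset\g^*\times\AAA^1$ be the closure of the image of $(\widetilde{\Phi},\pi)$. Being irreducible and dominating the smooth curve $\AAA^1$, $\widetilde{V}$ is flat over $\AAA^1$, so all its fibres have equal dimension. The fibre over $t\ne0$ is $\overline{\Phi(M)}/t$, and the fibre over $0$, whose coordinate ring $\kk[\widetilde{V}]/t\kk[\widetilde{V}]$ is generated by the reductions modulo $t$ of the $\widetilde{\Phi}^\xi$, i.e.\ by the $\Phi_{T^*S}^\xi$, equals $\overline{\Phi(T^*S)}$. This yields the first equality. The second is obtained by the same flatness argument applied to $\kk[\widetilde{V}]^G$, which is torsion-free over the principal ideal domain $\kk[t]$ (hence flat) and satisfies $\kk[\widetilde{V}]^G/t\kk[\widetilde{V}]^G=(\kk[\widetilde{V}]/t\kk[\widetilde{V}])^G$ by reductivity.

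The step I expect to require the most care is the identification of the special fibre $\widetilde{V}_0$ with $\overline{\Phi(T^*S)}$, as opposed to a strictly larger variety acquiring extra components from sequences in $\widetilde{M}_{t\ne0}$ whose moment-map values converge in $\g^*$ without the source points converging in $\widetilde{M}$. The direct Rees-algebra computation of $\kk[\widetilde{V}]/t\kk[\widetilde{V}]$ sketched above bypasses this issue, but one must handle the potentially non-affine global structure of $\widetilde{M}$ and possible singularities of $S$ with care; when necessary, one may restrict to an invariant affine open subset of the smooth locus of $S$, since corank, defect, complexity and rank are all generic invariants.
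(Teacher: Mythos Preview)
Your overall strategy---deform $M$ along $S$ to the normal bundle $N(M/S)\simeq T^*S$, extend the moment map to a total moment map $\widetilde\Phi$ on the Rees family, and compare the images fibre by fibre---is exactly the paper's approach. The gap is precisely the step you flag at the end and then dismiss too quickly: the identification of the special fibre $\widetilde V_0$ with $\overline{\Phi(T^*S)}$.

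Your ``direct Rees-algebra computation'' does not establish this. What it gives is a surjection
\[
\kk[\widetilde V]/t\,\kk[\widetilde V]\;\twoheadrightarrow\;\kk\bigl[\overline{\Phi(T^*S)}\bigr],
\]
obtained by restricting the inclusion $\kk[\widetilde V]\hookrightarrow\kk[\widetilde M]$ modulo~$t$. Its kernel is $\bigl(\kk[\widetilde V]\cap t\,\kk[\widetilde M]\bigr)/t\,\kk[\widetilde V]$, and there is no reason for this to vanish: an element of $\kk[\widetilde V]$ divisible by $t$ in $\kk[\widetilde M]$ need not be divisible by $t$ already in the subring $\kk[\widetilde V]$. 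Geometrically, since over $c\ne0$ you have $\widetilde V_c=c^{-1}\,\overline{\Phi(M)}$, the special fibre $\widetilde V_0$ is nothing but the tangent cone $C_0\overline{\Phi(M)}$. Your argument thus proves only
\[
\overline{\Phi(T^*S)}\subseteq C_0\overline{\Phi(M)}
\quad\text{and}\quad
\dim\overline{\Phi(T^*S)}\le\dim\overline{\Phi(M)},
\]
with the analogous one-sided inequality for the $G$-quotients. To get equality you would need to know that $\overline{\Phi(M)}$ is already a cone through~$0$, which is essentially the content of Theorem~\ref{mom(Lagr)} and cannot be assumed here.

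The paper supplies the missing inequality by a more concrete construction: using the Local Structure Theorem and a Bia{\l}ynicki--Birula contraction, it builds an explicit $\overline P$-stable subvariety $\Ww\subset M$ with $\Phi(\Ww)\subset\overline\p_0^{\ann}$ a~priori, and a deformation $\widehat\Ww\to\AAA^1$ whose special fibre is the conormal bundle $\Uu$ to generic $\overline U$-orbits in~$S$. Upper semicontinuity of fibre dimension for $\widehat\Phi|_{\widehat\Ww}$, combined with the known equality $\overline{\Phi_0(\Uu)}=\overline\p_0^{\ann}$ from Theorem~\ref{horosph}, then forces $\overline{\Phi(\Ww)}=\overline\p_0^{\ann}$, after which $\overline{G\Ww}=M$ finishes the job. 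The key point is that the a~priori containment $\Phi(\Ww)\subset\overline\p_0^{\ann}$ converts the one-sided semicontinuity bound into an equality---exactly the ingredient your global flatness argument lacks.
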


In fact, we shall prove a more precise statement. Note that the moment map $\Phi:M\to\g^*$ is defined uniquely up to a shift by a $G$-fixed vector in~$\g^*$. Since the subvariety $S\subset M$ is isotropic and $G$-stable, $d\Phi$~vanishes on $TS\subset TM$, whence $\Phi(S)$ is a $G$-fixed vector. Shifting by the opposite vector, we may assume that $\Phi(S)=0$ and thus define the moment map uniquely. In view of Theorem~\ref{cork&def(T*X)} and the above formul{\ae} for the corank and defect, Theorem~\ref{c&r(Lagr)} stems from the following result:

\begin{theorem}\label{mom(Lagr)}
The closures of the images of the moment maps for $M$ and $T^*S$ coincide.
\end{theorem}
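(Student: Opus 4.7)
The plan is to use deformation to the normal bundle of $S$ in $M$, as announced in the introduction. First I would restrict to the smooth locus of $S$, a $G$-invariant open dense subset which does not affect the closure of the moment map image. The Lagrangian condition then gives a canonical $G$-equivariant isomorphism $N(M/S) \simeq T^*S$: for $p \in S$, since $T_pS$ is its own skew-orthocomplement in $T_pM$, the form $\omega$ induces a nondegenerate pairing $T_pS \times (T_pM/T_pS) \to \kk$. Using the moment-map identity $\langle d_p\Phi(\nu),\xi\rangle = \omega_p(\xi p,\nu)$ together with $\Phi|_S = 0$ (so that $d\Phi|_S$ factors through the normal bundle) and $\xi p \in T_pS$, one identifies, under this isomorphism, the restriction $d\Phi|_{N(M/S)}$ with $\Phi_0$ up to a sign that is immaterial for closures.

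Second, I would construct the deformation $\pi\colon\widetilde M\to\AAA^1$ with general fibres isomorphic to $M$ and special fibre $N(M/S)\simeq T^*S$. Locally this is $\Spec$ of the Rees algebra $\bigoplus_n t^{-n}\Ii^n$ associated with the ideal sheaf $\Ii$ of $S$ (with $\Ii^n = \Oo_M$ for $n \le 0$); it carries the induced $G$-action and a natural $\kk^\times$-action lifted from the scaling on $\AAA^1$. Since $\Phi^*\xi \in \Ii$ for every $\xi \in \g$ (because $\Phi(S)=0$), the function $\widetilde\Phi^*\xi := t^{-1}\Phi^*\xi$ is regular on $\widetilde M$, defining a $G$-equivariant morphism $\widetilde\Phi\colon\widetilde M\to\g^*$ that restricts to $t^{-1}\Phi$ on the general fibre $M_t$ and to $\Phi_0$ on the special fibre. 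Moreover $\widetilde\Phi$ is $\kk^\times$-equivariant, where $\kk^\times$ acts on $\g^*$ by the scaling $\xi\mapsto\lambda^{-1}\xi$, so $\overline{\widetilde\Phi(\widetilde M)}$ is an irreducible $G$-invariant cone through the origin containing both $\Phi(M)$ (from $t=1$) and $\Phi_0(T^*S)$ (from $t=0$).

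From here the equality $\overline{\Phi(M)} = \overline{\Phi_0(T^*S)}$ is extracted by comparing these two subvarieties inside the common irreducible cone $\overline{\widetilde\Phi(\widetilde M)}$. The main obstacle, in my view, lies in this last step: the $\kk^\times$-equivariance yields that $\overline{\Phi_0(T^*S)}$ is contained in the conical closure of $\overline{\Phi(M)}$, but deducing the reverse containment requires $\overline{\Phi(M)}$ to already be a cone through $0$. I expect this to be the most delicate point; it can be established either by invoking the Knop-Vinberg description of moment-map images (as $G$-saturations of an affine Cartan subspace, forced to be linear when it passes through $0$) or, more self-containedly, by a dimension count based on the formula $\dim\overline{\Phi(M)} = \dim Gp$ for a general point $p$ combined with the flatness of $\widetilde M \to \AAA^1$ and the $G\times\kk^\times$-equivariance of $\widetilde\Phi$.
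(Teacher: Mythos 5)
Your first two steps coincide with the paper's own setup (the identification $N(M/S)\simeq T^*S$ via $\omega$, the Rees-algebra deformation $\widehat M\to\AAA^1$, and the total moment map with $\widehat\Phi^*\xi=t^{-1}\Phi^*\xi$), so up to that point you are on track. But the step you yourself flag as the obstacle is where all the content of the theorem lies, and neither of your proposed remedies closes it. The family gives only the easy inequality: $\overline{\Phi_0(T^*S)}$ sits inside the fiber over $0$ of $\overline{\widehat\Phi(\widehat M)}$, an irreducible variety of dimension $\dim\overline{\Phi(M)}+1$ dominating $\AAA^1$, whence $\dim\overline{\Phi_0(T^*S)}\le\dim\overline{\Phi(M)}$; conicity of $\overline{\Phi(M)}$ would upgrade this to the containment $\overline{\Phi_0(T^*S)}\subseteq\overline{\Phi(M)}$, but the genuinely hard direction is the reverse one: to rule out that the image \emph{drops} in the limit $t\to0$, i.e.\ that $\Phi_0(M_0)$ is a proper subvariety of the special fiber of the image closure. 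Both of your tools are semicontinuous in the unfavorable sense: the image of the special fiber of a flat family can be strictly smaller than the limit of the images of nearby fibers, and generic $G$-orbit dimension is lower semicontinuous, so flatness together with $\dim\overline{\Phi(M)}=\dim Gp$ again only bounds $\dim\overline{\Phi_0(T^*S)}$ from above. Moreover, the description of $\overline{\Im\Phi}$ as the $G$-saturation of a (linear or affine) Cartan-type subspace is Knop's theorem about \emph{cotangent bundles}; for an abstract Hamiltonian $M$ it is not available off the shelf, and proving it for $M$ is essentially equivalent to the statement you are trying to prove.

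What the paper does to obtain the missing lower bound is to transport an explicit piece of the image from $T^*S$ to $M$. Using the Local Structure Theorem for $S$ it first proves (Theorem~\ref{horosph}) that for any smooth $G$-variety $Y$ the conormal bundle $\Uu$ of the foliation of $\overline U$-orbits satisfies $\overline{\Phi(\Uu)}=\overline\p_0^{\ann}$ and $\overline{G\Uu}=T^*Y$, so $\overline{\Im\Phi}=G\p_0^{\ann}$; then, inside $M$, it spreads the fixed-point locus $\So^{\gamma}$ along the Hamiltonian flows of $P_0$-invariant functions $F_i$ extending functions on $Z\cap S$ (Lemmas \ref{sec.cone}, \ref{vec.bundle}, Proposition~\ref{fixed.pts}, and the Bia{\l}ynicki-Birula decomposition) to produce subvarieties $\Zz^{\circ}$ and $\Ww\subset M$ which specialize inside $\widehat M$ precisely to $\Zz_0$ and $\Ww_0=\Uu$. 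For the restricted family $\widehat\Phi\colon\widehat\Ww\to\overline\p_0^{\ann}\times\AAA^1$ the special-fiber image is already known to be dense in the fixed target $\overline\p_0^{\ann}$, so the fiber dimension theorem now works in the right direction and gives $\overline{\Phi(\Ww)}=\overline\p_0^{\ann}$, while the transversality argument of Theorem~\ref{horosph} gives $\overline{G\Ww}=M$; hence $\overline{\Im\Phi}=G\overline\p_0^{\ann}=G\p_0^{\ann}=\overline{\Im\Phi_0}$. Without a construction of this kind, producing enough of $\Im\Phi$ by hand inside $M$, the deformation and the conicity considerations alone do not yield the theorem.
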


\subsection{}\label{deform}

The main idea of the proof is deformation (or, strictly speaking, contraction) to the normal bundle \cite[5.1]{int}.

We may assume $S$ to be closed in~$M$. Consider the product $M\times\AAA^1$ of $M$ with the coordinate affine line and blow up $S\times\{0\}\subset M\times\AAA^1$. The exceptional divisor is isomorphic to the projective bundle $\PP(N\oplus\kk)$ over $S\times\{0\}$, where $N$ is the normal bundle of $S\subset M$. The strict preimage $\check{M}$ of $M\times\{0\}$ is nothing else but the blowup of $M\times\{0\}$ at $S\times\{0\}$.  These two divisors intersect in~$\PP(N)$, the exceptional divisor of $\check{M}\to M$. Removing $\check{M}$ we obtain a smooth variety $\widehat{M}$ together with a smooth morphism $\delta:\widehat{M}\to\AAA^1$ such that $\delta^{-1}(\AAA^1\setminus\{0\})\simeq M\times(\AAA^1\setminus\{0\})$ and $\delta^{-1}(0)\simeq N$. Furthermore, $\delta$~is equivariant with respect to the $\kk^{\times}$-actions on $\AAA^1$ by dilatations and on $\widehat{M}$ coming from the action on $M\times\AAA^1$ by dilatations of the second factor. The $\kk^{\times}$-action on $\delta^{-1}(0)$ is nothing else but the action on the vector bundle $N$ by inverse dilatations in the fibers.

In more algebraic terms, $\varphi:\widehat{M}\to M\times\AAA^1\to M$ is an affine morphism and $\varphi_*\Oo_{\widehat{M}}=\bigoplus_{n=-\infty}^{\infty}\Ii_S^nt^{-n}\subset\Oo_M[t^{\pm1}]$, where $t$ is the coordinate on~$\AAA^1$, $\Ii_S\lhd\Oo_M$ is the ideal sheaf defining~$S$, and $\Ii_S^{-1}=\Ii_S^{-2}=\dots=\Oo_M$ by definition.

\begin{lemma}\label{N=T*S}
$N\simeq T^*S$.
\end{lemma}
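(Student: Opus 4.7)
The plan is to write down the canonical isomorphism $N\simeq T^*S$ that exists for any Lagrangian submanifold of a symplectic manifold, induced by the symplectic form itself. I would work over the smooth locus of $S$, so that $N$ is a genuine vector bundle; this is enough for the applications to complexity, rank, and images of moment maps, which are all generic statements.

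Concretely, I would first define a morphism of $\Oo_S$-modules
\begin{equation*}
    \psi\colon TM|_S \longrightarrow T^*S,\qquad \psi(v)(u)=\omega(v,u),
\end{equation*}
for $v\in T_pM$ and $u\in T_pS$, $p\in S$. Since $\omega$ is $G$-invariant and both $S$ and $M$ are $G$-stable, $\psi$ is automatically $G$-equivariant.

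Next I would identify $\Ker\psi$ with $TS$. By definition $\Ker\psi$ is the skew-orthocomplement $(TS)^\sort$ inside $TM|_S$ with respect to~$\omega$. Because $S$ is isotropic we have $TS\subseteq(TS)^\sort$, and because $S$ is Lagrangian (so $\dim S=\tfrac12\dim M$) the opposite inclusion follows from the equality of dimensions $\dim(TS)^\sort=\dim M-\dim S=\dim S$. Hence $\psi$ descends to an injective $G$-equivariant morphism of vector bundles
\begin{equation*}
    \overline{\psi}\colon N=TM|_S/TS\longrightarrow T^*S.
\end{equation*}
Since $\rk N=\dim M-\dim S=\dim S=\rk T^*S$, the map $\overline{\psi}$ is an isomorphism.

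The only subtle point is that $S$, being a subvariety, need not be smooth, so strictly speaking $N$ is only the normal cone $C_SM$, not a vector bundle, on all of $S$. This will be the main (mild) obstacle: I would dispose of it by restricting to the smooth locus $\So\subseteq S$ (an open dense $G$-invariant subvariety), over which the construction above yields the claimed isomorphism $N|_{\So}\simeq T^*\So$. This suffices for the subsequent use in \S\ref{deform} to compare the $G$-actions and moment maps on $M$ and $T^*S$, since $N$ and $T^*S$ are then birationally identified as $G$-varieties.
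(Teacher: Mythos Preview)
Your proof is correct and is essentially the same as the paper's: both use that for a Lagrangian subvariety $S\subset M$ the symplectic form $\omega$ induces a nondegenerate pairing between $T_pS$ and $T_pM/T_pS=N_p$, so $N\simeq T^*S$. The paper states this in one sentence, whereas you spell out the map $\psi$ and the rank count explicitly; your additional remark about restricting to the smooth locus is a harmless extra caution not addressed in the paper's one-line argument.
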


\begin{proof}
For any $p\in S$, $T_pS$ is a Lagrangian subspace in~$T_pM$, whence the symplectic form $\omega$ induces a nondegenerate pairing between $T_pS$ and $T_pM/T_pS=N_p$.
\end{proof}

Recall that the Poisson bracket on $\Oo_M$ is defined by the formula $\{f,g\}=\omega(\nabla{f},\nabla{g})$, where $\nabla{f}$ is the skew gradient of a function~$f$, which satisfies the condition $\omega(\nabla{f},\cdot)=df$. The Poisson bracket endows $\Oo_M$ with a structure of a sheaf of Poisson algebras, i.e., it is a Lie bracket satisfying the Leibniz identity
\begin{equation*}
    \{f,gh\} = \{f,g\}\cdot h + g\cdot\{f,h\}, \qquad \forall f,g,h\in\Oo_M.
\end{equation*}
The 1-st order differential operator $\{f,\cdot\}$ is nothing else, but the Lie derivative along~$\nabla{f}$. See \cite[II.1.2--3]{comm&coiso} for details.

The Poisson structure on a cotangent bundle can be described as follows \cite[II.1.4]{comm&coiso}. In the notation of~\ref{Lagr<cotg}, $\pi_*\Oo_{T^*X}$~is generated by the functions $\pi^*f$, where $f$ is a function on (an open subset of)~$X$, and the vector fields $\xi$ on $X$, regarded as fiberwise linear functions on~$T^*X$. The respective Poisson brackets are: $\{\pi^*f,\pi^*g\}=0$, $\{\xi,\pi^*f\}=\pi^*(\xi{f})$, $\{\xi,\eta\}=[\xi,\eta]$. (Here $\xi{f}$ is the Lie derivative of $f$ along~$\xi$ and $[\xi,\eta]$ is the commutator of vector fields defined in such a way that the respective Lie derivative operator is the commutator of the Lie derivatives corresponding to $\xi,\eta$.)

There is yet another description of the Poisson structure on~$T^*X$. Namely the noncommutative algebra sheaf of differential operators on $X$ has an increasing filtration $\Dd_X=\bigcup_{n=0}^{\infty}\Dd_X^{(n)}$ by the order of a differential operator. The associated graded algebra sheaf $\gr\Dd_X$ is commutative and in fact $\gr\Dd_X\simeq\pi_*\Oo_{T^*X}$. The map $\Dd_X^{(n)}\to\Dd_X^{(n)}/\Dd_X^{(n-1)}\simeq\Sym_{\Oo_X}^n(\Tt_X)$, where $\Tt_X$ is the sheaf of vector fields, is known as the \emph{symbol map}. For any $\partial\in\Dd_X^{(n)}$, $\partial'\in\Dd_X^{(m)}$, we have  $[\partial,\partial']\in\Dd_X^{(n+m-1)}$, and it is easy to deduce from the previous paragraph that
\begin{equation*}
\{\partial\bmod\Dd_X^{(n-1)},\partial'\bmod\Dd_X^{(m-1)}\}=[\partial,\partial']\bmod\Dd_X^{(n+m-2)}.
\end{equation*}

We can equip $\widehat{M}$ with a natural Poisson structure. Since the subvariety $S\subset M$ is coisotropic (which means that $TS\supset(TS)^{\sort}$ in~$TM|_S$), the skew gradients of $f\in\Ii_S$ are tangent to~$S$, i.e., $\langle d\Ii_S,\nabla\Ii_S\rangle=0$ on~$S$ or, equivalently, $\{\Ii_S,\Ii_S\}\subset\Ii_S$. It easily follows that $\{\Ii_S^n,\Ii_S^m\}\subset\Ii_S^{n+m-1}$, $\forall n,m\in\ZZ$. Now the Poisson bracket on $\varphi_*\Oo_{\widehat{M}}$ is defined as $\{ft^{-n},gt^{-m}\}=\{f,g\}t^{-n-m+1}$, $\forall f\in\Ii_S^n$, $g\in\Ii_S^m$.

The Poisson variety $\widehat{M}$ is not symplectic and the Poisson bracket of functions can be computed fiberwise along symplectic leaves $M_c=\delta^{-1}(c)$, $c\in\AAA^1$. Let $\omega_c$ and $\{\cdot,\cdot\}_c$ denote the symplectic form and Poisson bracket on~$M_c$, respectively.

\begin{lemma}\label{def.sympl}
If $c\ne0$, then $\omega_c=\omega/c$ and $\{\cdot,\cdot\}_c=c\{\cdot,\cdot\}$ on $M_c\simeq M$, while $\omega_0$ and $\{\cdot,\cdot\}_0$ are the standard symplectic form and Poisson bracket on $M_0\simeq T^*S$.
\end{lemma}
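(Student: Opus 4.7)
The plan is to exploit the explicit description $\varphi_*\Oo_{\widehat M}=\bigoplus_{n\in\ZZ}\Ii_S^n t^{-n}$ together with the bracket formula $\{ft^{-n},gt^{-m}\}=\{f,g\}t^{-n-m+1}$ stated just above the lemma, reducing the whole computation to manipulations inside this graded algebra.

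First I would observe that $t=1\cdot t^1\in\Ii_S^{-1}t^1$, and the bracket formula with $f=1$, $n=-1$ yields $\{t,\cdot\}=0$. Hence $t$ is a Casimir, $\delta:\widehat M\to\AAA^1$ is Poisson (with trivial bracket on the target), and each fiber inherits a Poisson structure by restriction. For $c\neq 0$, the canonical trivialization identifies $M_c$ with $M$, and the inclusion $\Oo_M=\Ii_S^0 t^0\hookrightarrow\varphi_*\Oo_{\widehat M}$ restricts to the identity $\Oo_{M_c}\simeq\Oo_M$. Applying the bracket formula with $n=m=0$ gives $\{f,g\}_{\widehat M}=\{f,g\}_M\cdot t$, which on the locus $t=c$ becomes $\{f,g\}_c=c\{f,g\}_M$; equivalently, $\omega_c=\omega/c$ via the standard relation between Poisson and symplectic structures.

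For $c=0$, I would compute $(t)\cdot\varphi_*\Oo_{\widehat M}=\bigoplus_n\Ii_S^{n+1}t^{-n}$, so $\Oo_{M_0}\simeq\gr_{\Ii_S}\Oo_M\simeq\Sym^\bullet_{\Oo_S}(\Ii_S/\Ii_S^2)$. By Lemma~\ref{N=T*S}, $N(M/S)\simeq T^*S$; dualizing gives $\Ii_S/\Ii_S^2=N^*(M/S)\simeq\Tt_S$, and hence an isomorphism of algebras $\Oo_{M_0}\simeq\Oo_{T^*S}$ under which the class of $g\in\Ii_S$ corresponds to the vector field $\xi_g:=\nabla g|_S$ (which is tangent to $S$ because $S$ is Lagrangian, hence coisotropic). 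This recovers the identification $M_0\simeq N\simeq T^*S$ at the level of structure sheaves.

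To verify that the induced Poisson bracket on $\Oo_{M_0}$ is the standard one on $T^*S$ recalled in~\ref{Lagr<cotg}, it suffices to check it on generators in degrees $0$ and $1$. For $f,g\in\Oo_M$ the product $\{f,g\}\cdot t$ lies in $(t)$, so $\{[f],[g]\}_0=0$, matching $\{\pi^*f,\pi^*g\}=0$. For $f\in\Oo_M$ and $g\in\Ii_S$, the class of the bracket is $\{f,g\}|_S=(\nabla g)(f)|_S=\xi_g(f|_S)$, matching $\{\xi,\pi^*f\}=\pi^*(\xi f)$. For $f,g\in\Ii_S$, the standard identity $\nabla\{f,g\}=[\nabla f,\nabla g]$ shows that the class of $\{f,g\}$ in $\Ii_S/\Ii_S^2$ corresponds to $[\xi_f,\xi_g]$, matching $\{\xi,\eta\}=[\xi,\eta]$. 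The main technical point to pin down is that the isomorphism $\Ii_S/\Ii_S^2\simeq\Tt_S$ coming from Lemma~\ref{N=T*S} sends $[g]$ to $\nabla g|_S$ (up to a sign absorbed into conventions); this is a routine unpacking of the symplectic pairing $T_pS\otimes T_pM/T_pS\to\kk$ used in that lemma.
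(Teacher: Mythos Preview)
Your proposal is correct and follows essentially the same route as the paper: both treat the case $c\ne0$ by direct inspection of the bracket formula, and for $c=0$ both identify $\Oo_{M_0}$ with the associated graded $\bigoplus_n\Ii_S^n/\Ii_S^{n+1}$, use the isomorphism $\Nn^*\simeq\Tt_S$ from Lemma~\ref{N=T*S} to send $[g]\mapsto\nabla g|_S$, and then verify the three generator brackets $\{\pi^*f,\pi^*g\}=0$, $\{\xi,\pi^*f\}=\pi^*(\xi f)$, $\{\xi,\eta\}=[\xi,\eta]$ via the identity $\nabla\{f,g\}=[\nabla f,\nabla g]$. Your explicit observation that $t$ is a Casimir is a nice addition the paper leaves implicit, but otherwise the arguments coincide.
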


\begin{proof}
The assertion is obvious for $c\ne0$. For $c=0$ we observe that $\varphi_*\Oo_{M_0}=\bigoplus_{n=0}^{\infty}\Ii_S^n/\Ii_S^{n+1}\cdot t^{-n}$ is generated by $\Oo_S=\Oo_M/\Ii_S$ and~$\Nn^*t^{-1}$, where $\Nn^*=\Ii_S/\Ii_S^2$ is the conormal sheaf of~$S$, so it suffices to compute the Poisson bracket of generators. For $f,g\in\Oo_M$ we have
\begin{equation*}
\{f\bmod\Ii_S,g\bmod\Ii_S\}=(\{f,g\}\bmod\Ii_S^0)t=0.
\end{equation*}
If $f\in\Ii_S$, then
\begin{equation*}
\{(f\bmod\Ii_S^2)t^{-1},g\bmod\Ii_S\}=\{f,g\}\bmod\Ii_S=(\nabla{f})g|_S.
\end{equation*}
If both $f,g\in\Ii_S$, then
\begin{equation*}
\{(f\bmod\Ii_S^2)t^{-1},(g\bmod\Ii_S^2)t^{-1}\}=(\{f,g\}\bmod\Ii_S^2)t^{-1}
\end{equation*}
and $\{f,g\}\bmod\Ii_S^2$ is identified with $\nabla\{f,g\}=[\nabla{f},\nabla{g}]$ restricted to $S$ via $\Nn^*\simeq\Tt_S$. Since $S$ is Lagrangian, $\Tt_S$~is generated by the vector fields~$\nabla{f}$, $f\in\Ii_S$, and, taking into account the above description of the Poisson bracket on a cotangent bundle, we conclude the proof.
\end{proof}

The moment map of $M$ can be deformed as well. Namely consider the \emph{total moment map} $\widehat\Phi:\widehat{M}\to\g^*\times\AAA^1$ such that the dual algebra homomorphism $\widehat\Phi^*:\kk[\g^*][t]\to\kk[\widehat{M}]$ is defined by the formul\ae: $\widehat\Phi^*\xi=\Phi^*\xi\cdot t^{-1}$, $\forall\xi\in\g$, and $\widehat\Phi^*t=t$. Clearly, $\widehat\Phi$ maps $M_c$ to $\g^*\times\{c\}\simeq\g^*$.

\begin{lemma}
$\Phi_c=\widehat\Phi|_{M_c}$ is the moment map for the Hamiltonian $G$-action on $M_c$ equipped with the symplectic structure as in Lemma~\ref{def.sympl}.
\end{lemma}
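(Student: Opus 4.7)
The plan is to verify, for each fixed $c \in \AAA^1$, both the $G$-equivariance of $\Phi_c$ and the moment map identity $\{\Phi_c^*\xi, f\}_c = \xi_* f$ for all $\xi \in \g$ and all (local) functions $f$ on $M_c$. Equivariance of $\widehat\Phi$ (and hence of each $\Phi_c$) is immediate from the construction: $\Phi^*$ is $G$-equivariant with respect to the coadjoint action and $t$ is $G$-invariant, so $\widehat\Phi^*$ is $G$-equivariant as well. It thus suffices to check the Hamiltonian condition fiberwise.

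For $c \ne 0$, identify $M_c \simeq M$ via the projection. On $M_c$ one has $t = c$, so $\Phi_c^*\xi = c^{-1}\Phi^*\xi$, while Lemma~\ref{def.sympl} gives $\{\cdot,\cdot\}_c = c\{\cdot,\cdot\}$. The two factors of $c$ cancel: $\{\Phi_c^*\xi, f\}_c = c\{c^{-1}\Phi^*\xi, f\} = \{\Phi^*\xi, f\} = \xi_* f$, which is the moment map identity for $\Phi$ on $M$.

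For $c = 0$ the argument uses the presentation of $\varphi_*\Oo_{M_0} = \bigoplus_{n\ge 0}(\Ii_S^n/\Ii_S^{n+1})t^{-n}$ from Lemma~\ref{def.sympl}. Because $S$ is $G$-stable and $\Phi(S) = 0$, every $\Phi^*\xi$ lies in $\Ii_S$, so $\Phi_0^*\xi = (\Phi^*\xi \bmod \Ii_S^2)t^{-1}$ belongs to the summand $\Nn^*t^{-1}$. The Poisson bracket formulas displayed in the proof of Lemma~\ref{def.sympl} then give
\begin{equation*}
\{\Phi_0^*\xi,\, g \bmod \Ii_S\}_0 = (\nabla\Phi^*\xi)g|_S = (\xi_* g)|_S
\end{equation*}
for $g \in \Oo_M$, and
\begin{equation*}
\{\Phi_0^*\xi,\, (g \bmod \Ii_S^2)t^{-1}\}_0 = (\{\Phi^*\xi, g\} \bmod \Ii_S^2)t^{-1} = (\xi_* g \bmod \Ii_S^2)t^{-1}
\end{equation*}
for $g \in \Ii_S$. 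Since $G$ preserves both $S$ and the filtration by powers of $\Ii_S$ and acts trivially on $t$, the induced $G$-action on $\varphi_*\Oo_{M_0}$ sends $g \bmod \Ii_S$ to $\xi_* g \bmod \Ii_S$ and $(g \bmod \Ii_S^2)t^{-1}$ to $(\xi_* g \bmod \Ii_S^2)t^{-1}$; hence the above brackets reproduce the Lie derivative along $\xi_*$ on a system of generators of $\Oo_{M_0}$, and by the Leibniz rule on all of $\Oo_{M_0}$. This is precisely the moment map identity on $M_0$.

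The main subtlety is the $c = 0$ case: once one correctly matches the generators of $\Oo_{M_0}$ with the $\Oo_S$-algebra structure of $\kk[N] = \kk[T^*S]$ under the identification of Lemma~\ref{N=T*S}, the Poisson brackets already computed in Lemma~\ref{def.sympl} do all of the work, and nothing new has to be proved.
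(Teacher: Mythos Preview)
Your proof is correct and follows essentially the same approach as the paper: both treat $c\ne0$ by the obvious rescaling and handle $c=0$ via the description of $\Phi_0^*\xi=(\Phi^*\xi\bmod\Ii_S^2)t^{-1}$ together with the structure established in Lemma~\ref{def.sympl}. The only cosmetic difference is that the paper, instead of checking $\{\Phi_0^*\xi,\cdot\}_0=\xi_*$ on generators as you do, identifies $\Phi^*\xi\bmod\Ii_S^2$ with the velocity field $\nabla(\Phi^*\xi)|_S=\xi_*|_S$ under $\Nn^*\simeq\Tt_S$ and then invokes the standard fact that this fiberwise linear function is the moment map of~$T^*S$.
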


\begin{proof}
For $c\ne0$ we have $\widehat\Phi(p,c)=(\Phi(p)/c,c)$ on $M_c\simeq M\times\{c\}$, whence the claim. For $c=0$ we have $\Phi_0^*\xi=(\Phi^*\xi\bmod\Ii_S^2)t^{-1}$, $\forall\xi\in\g$, and $\Phi^*\xi\bmod\Ii_S^2$ is identified with~$\nabla(\Phi^*\xi)|_S$, which is the velocity field of $\xi$ on $S$ by the definition of the moment map. But this velocity field, considered as a fiberwise linear function on~$T^*S$, has the velocity field of $\xi$ on $T^*S$ as its skew gradient \cite[II.1.4, 2.1]{comm&coiso}.
\end{proof}

Thus the Hamiltonian structure on $M$ is contracted to the Hamiltonian structure on~$T^*S$.

\begin{remark}
In differential geometry, for a Hamiltonian action of a compact Lie group $G$ on a symplectic manifold $M$ with a $G$-stable Lagrangian submanifold~$S$, it follows from the equivariant Darboux--Weinstein theorem \cite[\S22]{symp} that $M$ is locally isomorphic (as a Hamiltonian manifold) to $T^*S$ in a neighborhood of~$S$, cf.~\cite[Chap.\,IV, Prop.\,1.1]{geom.as}. For Hamiltonian actions of reductive algebraic groups this is no longer true. To construct a counterexample, it suffices to find a point $p\in S$ such that $T_pS$ has no $G_p^{\circ}$-stable complement in~$T_pM$. (This could not happen if $M$ were locally isomorphic to $T^*S$ in a neighborhood of~$p$, even in {\'e}tale topology.)

For instance, consider the variety $X$ of complete conics, which is obtained from the space $\PP^5=\PP(\Sym^2\kk^3)^*$ of plane conics by blowing up the surface of double lines, see e.g.\ \cite[Ex.\,17.12]{hom&emb}. One may define $X$ by considering the dual projective space $(\PP^5)^*=\PP(\Sym^2\kk^3)$ and taking the subvariety in $\PP^5\times(\PP^5)^*$ given by the equation ``$x\cdot x'$ is a scalar matrix'', where $x,x'$ are symmetric $3\times3$ matrices of homogeneous coordinates in $\PP^5$ and~$(\PP^5)^*$. If $x$ is the matrix of a nondegenerate quadratic form representing a smooth conic in~$\PP^2$, then $x'$ represents the dual conic in~$(\PP^2)^*$. The group $G=\SL_3(\kk)$ acts on $X$ in a natural way with four orbits distinguished by the pair of values $(\rk{x},\rk{x'})=(3,3)$, $(2,1)$, $(1,2)$, or~$(1,1)$. Take the closed orbit $Y=\{\rk{x}=\rk{x'}=1\}\subset X$ and put $M=T^*X$, $S=N^*(X/Y)$. If $y\in Y$ is the pair of $B$-stable double lines in $\PP^2$ and~$(\PP^2)^*$, then $G_y=B$ acts on the conormal space $N^*_y$ by two linearly independent weights (doubled simple roots). Hence a general point $p\in N^*_y$ has open orbit in $S$ and $G_p^{\circ}=U$. Explicit calculations show that $T_pS$ has no $U$-stable complement in~$T_pM$.
\end{remark}

\subsection{}\label{deg.horosph}

In order to study the image of the moment map, we need some results from the forthcoming paper~\cite{cotangent}. For convenience of the reader, we reproduce them here.

Let $Y$ be a smooth $G$-variety. We apply the Local Structure Theorem to $X=Y$ and use the notation therefrom.

Consider any faithful linear representation $G\hookrightarrow\GL_n(\kk)$ and the respective $G$-invariant inner product $(\xi,\eta)=\tr(\xi\eta)$ on~$\g$. This allows to identify $\g$ with~$\g^*$. The inner product is nondegenerate on $\lv$ and~$\lv_0$, since both groups $L,L_0$ are reductive. The orthocomplement $\ab$ of $\lv_0$ in $\lv$ is a toric subalgebra identified with the Lie algebra of~$A$. Put $M=Z_G(\ab)$. The subalgebras $\Ru\p,\Ru[-]\p$ are isotropic subspaces orthogonal to~$\lv$, and the inner product puts them in duality.

Choose a 1-parameter subgroup $\gamma:\kk^{\times}\to T\cap L_0$ defining the parabolic $M\cap P^-\subset M$, i.e., the eigenweights of $\gamma$ on $\m\cap\Ru[-]\p$ are positive and on $\lv$ are zero. Similarly, $\gamma$~defines a parabolic subgroup $\overline{Q}\subset G$, with the Levi decomposition $\overline{Q}=\Ru{\overline{Q}}\leftthreetimes\overline{M}$. Note that
\begin{align*}
    \overline{Q}      &= \{g\in G\mid\exists\lim_{t\to0}\gamma(t)g\gamma(t)^{-1}\}, \\
    \Ru{\overline{Q}} &= \{g\in G\mid\lim_{t\to0}\gamma(t)g\gamma(t)^{-1}=e\},      \\
    \overline{M}      &= \{g\in G\mid\gamma(t)g\gamma(t)^{-1}=g\},
\end{align*}
and $\overline{M}\cap M=L$. Then $\overline{U}=\Ru{\overline{Q}}\leftthreetimes(\overline{M}\cap U)$ is a maximal unipotent subgroup of $G$ normalized by~$T$, and $\overline{P}=\Ru{\overline{Q}}\leftthreetimes(\overline{M}\cap P)$ is a parabolic sharing the Levi subgroup $L$ with~$P$. We represent the root systems of various parabolics and their Levi subgroups at Figure~\ref{parab}.

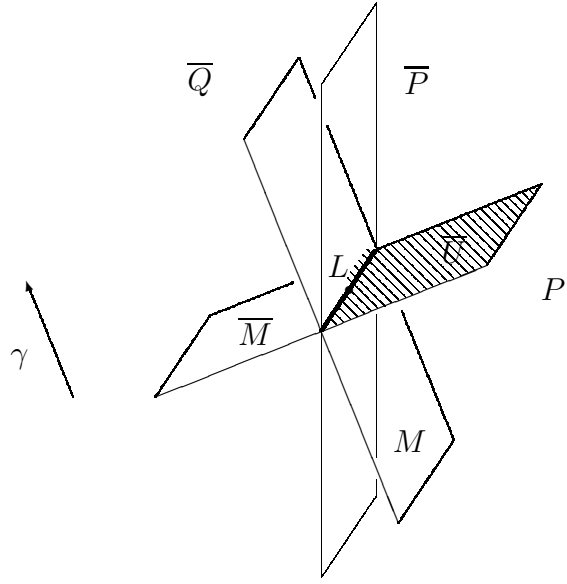
\begin{figure}
\unitlength .4ex 
\linethickness{0.4pt}
\ifx\plotpoint\undefined\newsavebox{\plotpoint}\fi 
\begin{picture}(97,105)(0,0)
\multiput(25,33)(.11904762,.17857143){84}{\line(0,1){.17857143}}
\multiput(85,57)(.11904762,.17857143){84}{\line(0,1){.17857143}}
\multiput(69,10)(.11904762,.17857143){84}{\line(0,1){.17857143}}
\multiput(41,80)(.11904762,.17857143){84}{\line(0,1){.17857143}}
\multiput(65,60)(.3,.12){100}{\line(1,0){.3}}
\multiput(10,33)(-.11940299,.29850746){67}{\line(0,1){.29850746}}
\put(2,53){\vector(-1,3){.417}}
\multiput(51,95)(.12,-.3){25}{\line(0,-1){.3}}
\multiput(65,60)(-.12,.3){75}{\line(0,1){.3}}
\multiput(79,25)(-.11875,.296875){80}{\line(0,1){.296875}}
\multiput(35,48)(.3,.12){50}{\line(1,0){.3}}
\put(2,40){\makebox(0,0)[rc]{$\gamma$}}
\put(43,45){\makebox(0,0)[cc]{$\overline{M}$}}
\put(71,25){\makebox(0,0)[cc]{$M$}}
\put(60,55){\makebox(0,0)[rb]{$L$}}
\put(79,60){\makebox(0,0)[cc]{$\overline{U}$}}
\put(70,91){\makebox(0,0)[lc]{$\overline{P}$}}
\put(97,52.75){\makebox(0,0)[cc]{$P$}}
\put(33,91){\makebox(0,0)[cc]{$\overline{Q}$}}
\put(25,33){\line(5,2){60}}
\put(41,80){\line(2,-5){28}}
\linethickness{1.5pt}
\multiput(55,45)(.11904762,.17857143){84}{\line(0,1){.17857143}}
\linethickness{0.05pt}
\put(55,90){\line(0,-1){90}}
\put(65,105){\line(0,-1){45}}
\put(65,15){\line(0,1){3}}
\put(65,22){\line(0,1){26}}
\put(60,52.5){\circle*{1.5}}
\multiput(55,90)(.0299401198,.0449101796){334}{\line(0,1){.0449101796}}
\multiput(55,0)(.0299401198,.0449101796){334}{\line(0,1){.0449101796}}
\multiput(65,60)(.0299227799,-.0299227799){259}{\line(0,-1){.0299227799}}
\multiput(66.25,60.5)(.0299227799,-.0299227799){259}{\line(0,-1){.0299227799}}
\multiput(67.5,61)(.0299227799,-.0299227799){259}{\line(0,-1){.0299227799}}
\multiput(68.75,61.5)(.0299227799,-.0299227799){259}{\line(0,-1){.0299227799}}
\multiput(70,62)(.0299227799,-.0299227799){259}{\line(0,-1){.0299227799}}
\multiput(71.25,62.5)(.0299227799,-.0299227799){259}{\line(0,-1){.0299227799}}
\multiput(72.5,63)(.0299227799,-.0299227799){259}{\line(0,-1){.0299227799}}
\multiput(73.75,63.5)(.0299227799,-.0299227799){259}{\line(0,-1){.0299227799}}
\multiput(75,64)(.0299227799,-.0299227799){259}{\line(0,-1){.0299227799}}
\multiput(76.25,64.5)(.0299227799,-.0299227799){259}{\line(0,-1){.0299227799}}
\multiput(77.5,65)(.0299227799,-.0299227799){259}{\line(0,-1){.0299227799}}
\multiput(78.75,65.5)(.02991453,-.02991453){234}{\line(0,-1){.02991453}}
\multiput(80,66)(.03,-.03){220}{\line(0,-1){.03}}
\multiput(81.25,66.5)(.029904306,-.029904306){203}{\line(0,-1){.029904306}}
\multiput(82.5,67)(.029891304,-.029891304){184}{\line(0,-1){.029891304}}
\multiput(83.75,67.5)(.02994012,-.02994012){167}{\line(0,-1){.02994012}}
\multiput(85,68)(.03,-.03){148}{\line(0,-1){.03}}
\multiput(86.25,68.5)(.029850746,-.029850746){130}{\line(0,-1){.029850746}}
\multiput(87.5,69)(.02991453,-.02991453){110}{\line(0,-1){.02991453}}
\multiput(88.75,69.5)(.029891304,-.029891304){92}{\line(0,-1){.029891304}}
\multiput(90,70)(.03,-.03){75}{\line(0,-1){.03}}
\multiput(91.25,70.5)(.02966102,-.02966102){55}{\line(0,-1){.02966102}}
\multiput(92.5,71)(.0297619,-.0297619){40}{\line(0,-1){.0297619}}
\multiput(71.5,51.75)(-.03,.03){275}{\line(0,1){.03}}
\multiput(70.25,51.25)(-.0299227799,.0299227799){259}{\line(0,1){.0299227799}}
\multiput(69,50.75)(-.029958678,.029958678){242}{\line(0,1){.029958678}}
\multiput(67.75,50.25)(-.03,.03){225}{\line(0,1){.03}}
\multiput(66.5,49.625)(-.029904306,.029904306){209}{\line(0,1){.029904306}}
\multiput(65.25,49.25)(-.029891304,.029891304){184}{\line(0,1){.029891304}}
\multiput(64,48.75)(-.03,.03){125}{\line(0,1){.03}}
\multiput(62.75,48.25)(-.029816514,.029816514){109}{\line(0,1){.029816514}}
\multiput(61.5,47.75)(-.029891304,.029891304){92}{\line(0,1){.029891304}}
\multiput(60.25,47.25)(-.03,.03){75}{\line(0,1){.03}}
\multiput(59,46.75)(-.02966102,.02966102){59}{\line(0,1){.02966102}}
\multiput(57.75,46.25)(-.0297619,.0297619){42}{\line(0,1){.0297619}}
\end{picture}
  \caption{Roots of various subgroups}\label{parab}
\end{figure}

\begin{proposition}\strut\newline
\begin{enumerate}
  \item $\Ru{\overline{Q}}$-orbits of the points of $Z$ coincide with their $(\Ru{\overline{Q}}\cap\Ru{P})$-orbits.
  \item $\overline{U}$-orbits of the points of $Z$ coincide with their $(\overline{U}\cap\Ru{P})$-orbits.
  \item $\overline{U}Z\simeq(\Ru{\overline{Q}}\cap\Ru{P})\times(\overline{M}\cap\Ru{P})\times Z$ is the set of points $y\in Y$ such that $\lim_{t\to0}\gamma(t)y$ exists in~$\Yo$, and taking this limit is a retraction onto the set $(\overline{M}\cap\Ru{P})\times Z$ of $\gamma$-fixed points in~$\Yo$.
\end{enumerate}
\end{proposition}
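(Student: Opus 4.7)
The plan is to exploit the retraction $y\mapsto\lim_{t\to0}\gamma(t)y$ together with the key observation that $\gamma\in L_0$ fixes $Z$ pointwise (by the Local Structure Theorem the $L$-action on $Z$ factors through the free action of $A=L/L_0$). For any $g\in G$ and $z\in Z$ this yields
\[
    \gamma(t)\cdot gz = (\gamma(t)\,g\,\gamma(t)^{-1})\cdot z,
\]
so the $\gamma$-dynamics on $gz$ is governed entirely by the adjoint action on~$g$. I note in addition the $\gamma$-weight decomposition $\Ru\p=(\Ru\p\cap\Ru[-]{\overline{\q}})\oplus(\Ru\p\cap\overline{\m})\oplus(\Ru\p\cap\Ru{\overline{\q}})$, which gives a product decomposition of $\Ru{P}$ as a variety and identifies $\Yo^\gamma=(\Ru{P}\cap\overline{M})\times Z$.

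For~(1), pick $u\in\Ru{\overline{Q}}$ and $z\in Z$: the contraction $\gamma(t)u\gamma(t)^{-1}\to e$ forces $\lim_{t\to0}\gamma(t)uz=z$, and since $\Yo$ is open and $\gamma$-stable (as $\gamma\in L_0\subset P$), the point $uz$ already lies in~$\Yo$. Writing $uz=u'z'$ with $u'\in\Ru{P}$ and $z'\in Z$, and decomposing $u'=u'_-u'_0u'_+$ according to the three $\gamma$-weight pieces, the existence of the limit of $(\gamma(t)u'\gamma(t)^{-1})z'$ forces $u'_-=e$, and comparing the resulting limit $u'_0z'$ with $z$ in the coordinates of $\Yo^\gamma$ forces $u'_0=e$ and $z'=z$. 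Hence $u'\in\Ru{\overline{Q}}\cap\Ru{P}$, which is~(1).

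For~(2), I would write $v\in\overline{U}$ as $v_1v_2$ with $v_1\in\Ru{\overline{Q}}$ and $v_2\in\overline{M}\cap U$, then further split $v_2=u_Lv_2'$ via $\overline{M}\cap U=U_L\cdot(\overline{M}\cap\Ru{P})$, where $U_L=L\cap U\subset[L,L]\subset L_0$ fixes~$z$. Since $L$ acts on $\Yo=\Ru{P}\times Z$ by conjugation on the first factor and trivially on $z$, this gives $v_2z=v_2''z$ with $v_2'':=u_Lv_2'u_L^{-1}\in\overline{M}\cap\Ru{P}$. Rewriting $v_1v_2''=v_2''\,((v_2'')^{-1}v_1v_2'')$ and using normality of $\Ru{\overline{Q}}$ in $\overline{Q}\supset\overline{M}$, the second factor lies in $\Ru{\overline{Q}}$, and part~(1) absorbs it into $\Ru{\overline{Q}}\cap\Ru{P}$. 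Thus $vz\in(\overline{M}\cap\Ru{P})(\Ru{\overline{Q}}\cap\Ru{P})\,z=(\overline{U}\cap\Ru{P})\,z$.

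For~(3), the same $\gamma$-weight analysis applied to a general point $u'z'\in\Yo$ shows that its $\gamma$-limit exists precisely when the negative-weight piece of $u'$ vanishes, so the attracting variety of $\Yo^\gamma$ in $\Yo$ is $(\Ru{\overline{Q}}\cap\Ru{P})\cdot(\overline{M}\cap\Ru{P})\cdot Z$, isomorphic to the claimed direct product via the graded decomposition of $\Ru{P}$. No $y\in Y\setminus\Yo$ can have $\gamma$-limit in $\Yo$, by openness and $\gamma$-stability of~$\Yo$, so this attracting set is the full answer; by~(2) it equals $\overline{U}Z$, and the retraction claim is the Bialynicki--Birula projection to $\Yo^\gamma$. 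The main obstacle will be part~(2): keeping track of which subgroup each factor lies in, which crucially uses the inclusions $[L,L]\subset L_0$ and $L\subset\overline{M}$; without these, the splitting $v_2=u_Lv_2'$ and the conjugation trick do not cleanly reduce to~(1).
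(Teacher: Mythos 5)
Your argument is correct and follows essentially the same route as the paper: the $\gamma$-weight (i.e.\ $\Ru[-]{\overline{Q}},\overline{M},\Ru{\overline{Q}}$) product decomposition of $\Ru{P}$ and hence of $\Yo\simeq\Ru{P}\times Z$, the limit analysis forcing the negative piece to vanish, the openness/$\gamma$-stability of $\Yo$ to pull $uz$ (and any point with limit in $\Yo$) back into $\Yo$, and for (2) the splitting of $\overline{U}$ together with the fact that $U\cap L\subset L_0$ fixes $Z$ plus normality of $\Ru{\overline{Q}}$ to reduce to (1). Your write-up of (2) just makes explicit what the paper compresses into the remark that $\overline{U}\cap L=U\cap L_0$ fixes $z$, and the appeal to Bia{\l}ynicki--Birula for the retraction is harmless (the explicit limit formula already gives it).
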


\begin{proof}
By \cite[28.1]{LAG}, the group multiplication induces a $T$-equivariant isomorphism of varieties $\Ru{P}\simeq(\Ru[-]{\overline{Q}}\cap\Ru{P})\times(\Ru{\overline{Q}}\cap\Ru{P})\times(\overline{M}\cap\Ru{P})$ (where $T$ acts on subgroups by conjugation), whence
\begin{equation*}
\Yo\simeq(\Ru[-]{\overline{Q}}\cap\Ru{P})\times(\Ru{\overline{Q}}\cap\Ru{P})\times(\overline{M}\cap\Ru{P})\times Z.
\end{equation*}
The subset of $\gamma$-fixed points is $\Yo^{\gamma}\simeq(\overline{M}\cap\Ru{P})\times Z$. The limit $y_0=\lim_{t\to0}\gamma(t)y$ is a $\gamma$-fixed point. If $y_0\in\Yo$, then $\gamma(t)y\in\Yo$ for $t$ in a neighborhood of~$0$, but $\Yo$ is $\gamma$-stable, whence $\gamma(t)y\in\Yo$ for all~$t$. Hence $y=u_-u_+u_0z$, where $z\in Z$, $u_{\pm}\in\Ru[\pm]{\overline{Q}}\cap\Ru{P}$, $u_0\in\overline{M}\cap\Ru{P}$. Applying $\gamma(t)$ to this equality and taking into account that $\nexists\lim_{t\to0}\gamma(t)u_-\gamma(t)^{-1}$ (unless $u_-=e$), $\lim_{t\to0}\gamma(t)u_+\gamma(t)^{-1}=e$, and $\gamma(t)$ commutes with $u_0$ and fixes~$z$, we see that $\lim_{t\to0}\gamma(t)y$ exists in $\Yo$ if and only if $u_-=e$, and then $y_0=u_0z$.

For $q\in\Ru{\overline{Q}}$ we have $\gamma(t)qz=\gamma(t)q\gamma(t)^{-1}z\to z$ as $t\to0$. By the above, $qz\in\Yo$ and $qz=u_+z$ for some $u_+\in\Ru{\overline{Q}}\cap\Ru{P}$. Also, $\overline{U}\cap L=U\cap L_0$ fixes~$z$. This completes the proof.
\end{proof}

Let
\begin{equation*}
\Uu = \{ p\in T^*Y \mid y=\pi(p)\in\overline{U}Z,\ \langle p,\overline{\un}y\rangle=0 \}
\end{equation*}
be the conormal bundle of the foliation of $\overline{U}$-orbits in $\overline{U}Z$. The subgroup $\overline{P}_0=\Ru{\overline{P}}\leftthreetimes L_0$ leaves each of these $\overline{U}$-orbits stable, and $\overline{P}$ permutes them, hence preserves~$\Uu$. Similarly, $P_0=\Ru{P}\leftthreetimes L_0$ preserves general $U$-orbits in~$Y$ and $P$ permutes them.

\begin{theorem}\label{horosph}
$G\Uu$ is dense in~$T^*Y$, $\Phi(\Uu)$ is dense in~$\overline\p_0^{\ann}=\ab\oplus\Ru{\overline\p}$, and $\overline{\Im\Phi}=G\overline\p_0^{\ann}=G\p_0^{\ann}$.
\end{theorem}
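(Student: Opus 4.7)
My plan is to break the theorem into three assertions --- (i) $\Phi(\Uu)$ dense in $\overline{\p}_0^{\ann} = \ab \oplus \Ru{\overline{\p}}$, (ii) $G\Uu$ dense in $T^*Y$, and (iii) the identification $\overline{\Im\Phi} = G\overline{\p}_0^{\ann} = G\p_0^{\ann}$ --- and handle them in that order, since (iii) will follow formally from (i) and (ii) plus a Weyl-group observation.

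For (i), the containment $\Phi(\Uu) \subseteq \overline{\p}_0^{\ann}$ is a direct moment-map computation. From $\langle \Phi(p), \xi \rangle = \langle p, \xi y \rangle$ with $y = \pi(p)$, the defining conormal condition on $\Uu$ yields $\Phi(p) \perp \overline{\un}$, and the preceding proposition (in particular, that $L_0$ preserves every $\overline{U}$-orbit, so $\lv_0 y \subseteq \overline{\un} y$) gives $\Phi(p) \perp \lv_0$; hence $\Phi(p) \in (\overline{\un} + \lv_0)^{\ann} = \overline{\p}_0^{\ann}$, which under the fixed inner product is $\ab \oplus \Ru{\overline{\p}}$. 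For density I would use a dimension count: Proposition~(3) makes $Z$ transverse to the $\overline{U}$-foliation in $\overline{U}Z$, so $\dim \Uu = \dim Z + \dim Y$; exploiting the $\overline{P}$-equivariance of $\Phi$ (so that $\Phi(\Uu)$ is $\overline{P}$-stable inside the $\overline{P}$-stable $\overline{\p}_0^{\ann}$) and analysing the generic fiber of $\Phi|_{\Uu}$, one checks that $\Phi(\Uu)$ fills up a dense subset of $\overline{\p}_0^{\ann}$, whose dimension $\dim A + \dim \Ru{P}$ fits.

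For (ii), the $\overline{P}$-stability of $\Uu$ (stated just before the theorem) combined with $\dim G/\overline{P} = \dim \Ru{\overline{\p}} = \dim \Ru{P}$ (same Levi as $P$) gives $\dim G \times^{\overline{P}} \Uu = \dim \Ru{P} + \dim Z + \dim Y = 2 \dim Y = \dim T^*Y$; density of $G \Uu$ in $T^*Y$ then reduces to generic finiteness of the natural map $G \times^{\overline{P}} \Uu \to T^*Y$. For generic $p \in \Uu$, the condition $g^{-1} p \in \Uu$ forces $g^{-1} y \in \overline{U} Z$ with $y = \pi(p)$, and a Bruhat-style analysis combined with $L$-genericity of $y$ pins $g$ down to $\overline{P}$ up to finitely many alternatives. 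This generic-finiteness step is the main technical obstacle; the remainder of the argument is essentially formal. For (iii), $G$-equivariance together with (i) and (ii) assembles to $\overline{\Im\Phi} = \overline{\Phi(G\Uu)} = \overline{G\Phi(\Uu)} = \overline{G \overline{\p}_0^{\ann}}$, and the final identification $G\overline{\p}_0^{\ann} = G\p_0^{\ann}$ uses that $P, \overline{P}$ are $N_G(L)$-conjugate (both with Levi $L$) and that the little Weyl group $W_L = N_G(L)/L$ acts trivially on the torus $A = L/L_0$ (hence on $\ab$); any $n \in N_G(L)$ taking $\p$ to $\overline{\p}$ therefore sends $\ab + \Ru{\p}$ to $\ab + \Ru{\overline{\p}}$.
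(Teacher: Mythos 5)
Your decomposition into (i) containment plus density of $\Phi(\Uu)$, (ii) density of $G\Uu$, (iii) the identification of $\overline{\Im\Phi}$, together with the dimension counts $\codim\Uu=\dim\Ru{P}=\dim\Ru[-]{\overline{P}}$, matches the paper's skeleton, and your containment argument $\Phi(\Uu)\subset\overline\p_0^{\ann}$ is essentially the paper's. But the two density statements, which are the real content of the theorem, are not actually proved in your proposal. For (i), $\overline{P}$-stability of $\Phi(\Uu)$ plus matching dimensions does not yield density; ``analysing the generic fiber of $\Phi|_{\Uu}$, one checks'' is exactly the step that has to be supplied. The paper does it by a concrete fiberwise computation: for $z\in Z$ it shows that $\Phi(\Uu_z)$ projects onto $\ab\oplus(\Ru{\overline\p}\cap\Ru[-]\p)$ along $\Ru{\overline\p}\cap\Ru\p$ (a transversality argument with $\gamma$-eigenweights on $(\Ru[-]{\overline\p}\cap\Ru\p)z$ versus $\overline\un z$), then picks $\zeta=\zeta_0+\zeta_+\in\Phi(\Uu_z)$ with $\zeta_0\in\ab$ general, and uses $\m\cap\Ru{\overline\p}\cap\Ru\p=0$ together with closedness of unipotent-group orbits in affine varieties to conclude $[\overline\un,\zeta]+\Phi(\Uu_z)=\overline\p_0^{\ann}$, hence density. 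For (ii) you explicitly leave the generic finiteness of $G\times^{\overline{P}}\Uu\to T^*Y$ unproved and call it the main obstacle; a ``Bruhat-style analysis'' is not routine here, and the paper avoids it entirely: by \cite[5.5]{Ad}, $\Ru[-]{\overline{P}}$ acts on general points of $\overline\p_0^{\ann}$ with trivial stabilizer and with orbits transversal to $\overline\p_0^{\ann}$, and pulling this back through the ($G$-equivariant) moment map using the density already established in (i) gives that $\Ru[-]{\overline{P}}\Uu$ is dense in $T^*Y$ by the codimension count. So the logical order matters: (ii) is deduced from (i) via the moment map, not proved independently.

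Step (iii) contains a genuine error. Parabolic subgroups sharing the same Levi subgroup need not be conjugate at all (already in $\SL_4$ one finds two flag stabilizers of different type with the same Levi subgroup), so the claim that $P$ and $\overline{P}$ are $N_G(L)$-conjugate is unjustified; likewise, ``$N_G(L)/L$ acts trivially on $A=L/L_0$'' is unsubstantiated, since $L_0$ is defined through the action on $Y$ rather than intrinsically in $L$, and need not even be normalized by $N_G(L)$. The paper gets $G\overline\p_0^{\ann}=G\p_0^{\ann}$ from \cite[5.5]{Ad} again: the variety $G\q^{\ann}$ attached to a parabolic with Levi $L_0$ depends only on $L_0$ and not on the choice of the parabolic (independence of induction from the parabolic), which is a nontrivial cited fact, not a conjugacy statement. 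Finally, note that the theorem asserts $\overline{\Im\Phi}=G\overline\p_0^{\ann}$ with no closure on the right; your chain of closures should end with the observation that $G\overline\p_0^{\ann}$ is already closed because $\overline\p_0^{\ann}$ is stable under the parabolic $\overline{P}$.
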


\begin{proof}
As elements of $\Uu$ vanish on tangent spaces of $\overline{P}_0$-orbits, we have $\Phi(\Uu)\subset\overline\p_0^{\ann}$. Consider a fiber $\Uu_z$, $z\in Z$. The subspace $\Phi(\Uu_z)$ projects onto $\ab\oplus(\Ru{\overline\p}\cap\Ru[-]\p)$ along $\Ru{\overline\p}\cap\Ru\p$. Indeed, $\ab\oplus(\Ru[-]{\overline\p}\cap\Ru\p)$ maps isomorphically onto a subspace of velocity fields $(\ab\oplus(\Ru[-]{\overline\p}\cap\Ru\p))z\subset T_zY$, which is transversal to~$\overline\un{z}$, because $\gamma$ has negative eigenweights on $(\Ru[-]{\overline\p}\cap\Ru\p)z$ and nonnegative eigenweights on~$\overline\un{z}$. Hence the inner products with elements of $\Phi(\Uu_z)$ span the dual space $(\ab\oplus(\Ru[-]{\overline\p}\cap\Ru\p))^*\simeq\ab\oplus(\Ru{\overline\p}\cap\Ru[-]\p)$.

Then $\Phi(\Uu_z)\ni\zeta=\zeta_0+\zeta_+$, where $\zeta_0\in\ab$ is a general point (by ``general'' we mean ``such that $\z_{\g}(\zeta_0)=\m$'') and $\zeta_+\in\Ru{\overline\p}\cap\Ru\p$. But $\m\cap\Ru{\overline\p}\cap\Ru\p=0$, whence $[\Ru{\overline\p}\cap\Ru\p,\zeta_0]=\Ru{\overline\p}\cap\Ru\p$, i.e., the $(\Ru{\overline{P}}\cap\Ru{P})$-orbit of $\zeta_0$ is dense in $\zeta_0+\Ru{\overline\p}\cap\Ru\p$. Since orbits of unipotent groups in affine varieties are closed \cite[1.3]{inv}, we have $(\Ru{\overline{P}}\cap\Ru{P})\zeta_0=\zeta_0+\Ru{\overline\p}\cap\Ru\p\ni\zeta$ and $[\Ru{\overline\p}\cap\Ru\p,\zeta]=\Ru{\overline\p}\cap\Ru\p$, whence $[\overline\un,\zeta]+\Phi(\Uu_z)=\overline\p_0^{\ann}$. Therefore $\Phi(\Uu)$ is dense in~$\overline\p_0^{\ann}$.

By \cite[5.5]{Ad}, $\Ru[-]{\overline{P}}$ acts on general points of $\overline\p_0^{\ann}$ with trivial stabilizer and the orbits are transversal to~$\overline\p_0^{\ann}$. Hence the same holds for general points of~$\Uu$. But $\codim\Uu=\dim Y -\dim Z=\dim\Ru{P}=\dim\Ru[-]{\overline{P}}$, whence $\Ru[-]{\overline{P}}\Uu$ is dense in~$T^*Y$. Thus $\overline{G\Uu}=T^*Y$ and $\overline{\Im\Phi}=G\overline\p_0^{\ann}$. (The latter set is closed, because $\overline\p_0^{\ann}$ is stable under a parabolic $\overline{P}\subset G$.) Again by \cite[5.5]{Ad}, $G\overline\p_0^{\ann}$ depends only on the Levi subgroup $L_0$ of~$\overline{P}_0$, whence $G\overline\p_0^{\ann}=G\p_0^{\ann}$.
\end{proof}

\begin{remark}
The equality $\overline{\Im\Phi}=G\p_0^{\ann}$ is due to Knop, with a more complicated proof \cite[5.4]{Weyl&mom}, see also \cite[8.5]{hom&emb}. For quasiaffine $Y$ one has $M=L$ \cite[II.3.6]{comm&coiso}, \cite[Prop.\,8.14]{hom&emb}, and can choose for $\gamma$ the trivial subgroup. Then $\overline{Q}=\overline{M}=G$, $\overline{U}=U$, and $\Uu$ is the conormal bundle of a foliation of general $U$-orbits in~$Y$. In this situation Theorem~\ref{horosph} was obtained by Knop \cite[3.2, 3.3]{inv.mot}, see also \cite[23.2]{hom&emb}.
\end{remark}

\subsection{}\label{proof}

In order to describe the image of the moment map for $M$ in the same way as for~$T^*S$, we need a substitute for~$\Uu$.

Let us apply the Local Structure Theorem to $X=M$, $Y=S$, and use the respective notation from \ref{LST&N*} and~\ref{deg.horosph}. Choose $m=\dim Z\cap S$ algebraically independent functions $f_1,\dots,f_m\in\kk[{Z\cap S}]$ and extend them to $L_0$-invariant functions $F_1,\dots,F_m\in\kk[Z]$. These functions are extended to $P_0$-invariant functions on $\So$ and~$\Mo$, respectively, denoted by the same letters. Shrinking~$\Mo$, we may assume that $df_i$ span the conormal spaces of the $U$-orbits in~$\So$, whence the skew gradients $\nabla{F_i}$ are linearly independent on $\So$ modulo~$T\So$.

We may interpret $F_i\in\Oo_M\subset\varphi_*\Oo_{\widehat{M}}$ as functions $\widehat{F}_i$ on (an open subset $\widehat{\Mo}$ of) $\widehat{M}$ defined by the formula
\begin{equation*}
\widehat{F}_i=
\begin{cases}
    F_i      & \text{on } M_c\simeq M, \qquad c\ne0, \\
    \pi^*f_i & \text{on } M_0\simeq T^*S.
\end{cases}
\end{equation*}
The respective Hamiltonian vector fields $\nabla\widehat{F}_i$ on $\widehat{M}$ are described fiberwise as follows:
\begin{equation*}
\nabla\widehat{F}_i|_{M_c}=
\begin{cases}
    c\nabla{F_i}, & c\ne0, \\
    -df_i,        & c=0,
\end{cases}
\end{equation*}
where $-df_i$ are regarded as vertical vector fields on $T^*S$ constant on fibers, cf.~\cite[II.1.4]{comm&coiso}. We deduce that $\nabla\widehat{F}_i$ are linearly independent on $\So\times\AAA^1\hookrightarrow\widehat{M}$ modulo~$T(\So\times\AAA^1)$.

Consider a closed subscheme $\Zz=\Phi^{-1}(\p_0^{\ann})\cap\Mo^{\gamma}\subset\Mo$. (The preimage and the intersection are meant in the schematic sense.) Note that $\Mo^{\gamma}$ is a smooth closed subset of $\Mo$ and $\nabla{F}_i$ are tangent to~$\Mo^{\gamma}$, because the functions $F_i$ and their skew gradients are $\gamma$-invariant (see Theorem~\ref{BB-dec} below). Also $\nabla{F}_i$ preserve the ideal sheaf of $\Phi^{-1}(\p_0^{\ann})$, because the latter is generated by the functions $\Phi^*\xi$, $\xi\in\p_0$, and $(\nabla{F}_i)\Phi^*\xi=\{F_i,\Phi^*\xi\}=-\xi_*F_i=0$. Hence $\nabla{F}_i$ preserve $\Ii_{\Zz}$ and induce derivations of~$\Oo_{\Zz}$.

The subscheme $\Zz$ can be put in a family $\widehat\Zz={\widehat\Phi^{-1}(\p_0^{\ann}\times\AAA^1)\cap\Bigl(\widehat\Mo\Bigr)^{\gamma}}$ of closed subschemes $\Zz_c=\widehat\Zz\cap\Mo_c$ in~$\Mo_c$, so that $\Zz=\Zz_1$ and $\Zz_0\simeq{(\overline{M}\cap\Ru{P})}\times T^*(Z\cap S)$ is the conormal bundle $\Phi_0^{-1}(\p_0^{\ann})\cap T^*\So$ of the foliation of $U$-orbits in $\So$ restricted to $\So^{\gamma}\simeq(\overline{M}\cap\Ru{P})\times(Z\cap S)$. The latter assertion follows from the $T$-equivariant isomorphism $\Phi_0^{-1}(\p_0^{\ann})\cap T^*\So\simeq\Ru{P}\times T^*(Z\cap S)$. As above, $\nabla\widehat{F}_i$ preserve $\Ii_{\widehat\Zz}$ and induce derivations of~$\Oo_{\widehat\Zz}$.

We make use of the following lemmata.

\begin{lemma}\label{sec.cone}
Let $X$ be a $\kk$-scheme of finite type and $Y\subset X$ be a closed subscheme. Every vector field on~$X$, i.e., a derivation of~$\Oo_X$ gives rise to a section of the normal cone~$C(X/Y)$.
\end{lemma}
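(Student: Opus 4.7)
The plan is to construct the section of $C(X/Y) \to Y$ as an $\Oo_Y$-algebra homomorphism
\begin{equation*}
\phi : \gr_I \Oo_X = \bigoplus_{n \geq 0} I^n/I^{n+1} \longrightarrow \Oo_Y,
\end{equation*}
where $I \vartriangleleft \Oo_X$ is the ideal sheaf of $Y$, so that $C(X/Y) = \Spec_{\Oo_Y} \gr_I \Oo_X$ and $\phi$ restricts to the identity on degree zero. Given a derivation $\partial$ of $\Oo_X$, the Leibniz rule gives $\partial(I^{k+1}) \subset I^k$ for every $k$; in particular, the formula $\bar f \mapsto \partial(f) \bmod I$ defines an $\Oo_Y$-linear map $I/I^2 \to \Oo_Y$. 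By the universal property of the symmetric algebra, this extends uniquely to an $\Oo_Y$-algebra homomorphism $\widetilde\phi : \Sym_{\Oo_Y}(I/I^2) \to \Oo_Y$.

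The crux is to show that $\widetilde\phi$ descends along the canonical surjection $\Sym_{\Oo_Y}(I/I^2) \twoheadrightarrow \gr_I \Oo_X$. Concretely, one has to verify that whenever $\sum_k f^{(k)}_1 \cdots f^{(k)}_n \in I^{n+1}$ with $f^{(k)}_i \in I$, one has $\sum_k \partial(f^{(k)}_1) \cdots \partial(f^{(k)}_n) \in I$. The key identity, obtained by iterating Leibniz, is
\begin{equation*}
\partial^n(f_1 \cdots f_n) \equiv n! \, \partial(f_1) \cdots \partial(f_n) \pmod{I} \qquad (f_1,\dots,f_n \in I),
\end{equation*}
since in the full expansion every term that leaves some $f_i$ undifferentiated carries a factor $f_i \in I$. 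Applied to the alleged relation and combined with $\partial^n(I^{n+1}) \subset I$, this yields $n! \sum_k \partial(f^{(k)}_1) \cdots \partial(f^{(k)}_n) \equiv 0 \pmod{I}$, and invertibility of $n!$ in characteristic zero finishes the check.

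The main obstacle is exactly this descent to $\gr_I \Oo_X$; everything else is formal. The step implicitly uses that $\gr_I \Oo_X$ is generated in degree one over $\Oo_Y$, which guarantees that a ring homomorphism out of it is determined by its values on $I/I^2$. Once $\phi$ is shown to exist, the composition $\Oo_Y \hookrightarrow \gr_I \Oo_X \to \Oo_Y$ is the identity by construction, so $\phi$ corresponds to a morphism $Y \to C(X/Y)$ which is a section of the projection, as required.
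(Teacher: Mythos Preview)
Your proof is correct and is essentially the same argument as the paper's, organized slightly differently. The paper defines the homomorphism directly on each graded piece by $\check\partial=\partial^n/n!$ on $\Ii_Y^n/\Ii_Y^{n+1}$ and then verifies that $\check\partial(f_1\cdots f_n\bmod\Ii_Y^{n+1})=\partial f_1\cdots\partial f_n\bmod\Ii_Y$, which simultaneously shows well-definedness and multiplicativity; you instead define the map on $I/I^2$, extend freely to $\Sym_{\Oo_Y}(I/I^2)$, and check descent along the surjection to $\gr_I\Oo_X$. Both routes rest on the identical computation $\partial^n(f_1\cdots f_n)\equiv n!\,\partial f_1\cdots\partial f_n\pmod I$ together with $\partial^n(I^{n+1})\subset I$, so the mathematical content is the same.
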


\begin{remark}
The lemma is geometrically obvious if $X$ and $Y$ are smooth varieties. Indeed, a vector field on $X$ is a section of the tangent bundle~$TX$, which can be restricted to $Y$ and projected to the normal bundle $N(X/Y)=C(X/Y)$. But we need this assertion in a more general situation.
\end{remark}

\begin{proof}
Recall that $C(X/Y)=\Spec_{\Oo_Y}\bigoplus_{n=0}^{\infty}\Ii_Y^n/\Ii_Y^{n+1}$. We denote by $\pi:C(X/Y)\to Y$ the canonical projection. Let $\partial$ be any derivation of~$\Oo_X$. Define an $\Oo_Y$-algebra homomorphism $\check\partial:\pi_*\Oo_{C(X/Y)}\to\Oo_Y=\Oo_X/\Ii_Y$ as follows: $\check\partial=\partial^n/n!$ on $\Ii_Y^n/\Ii_Y^{n+1}$. It is easy to see that
\begin{equation*}
\check\partial(f_1\cdots f_n\bmod\Ii_Y^{n+1})=\partial{f_1}\cdots\partial{f_n}\bmod\Ii_Y
\end{equation*}
for any $f_1,\dots,f_n\in\Ii_Y$, whence $\check\partial$ is indeed a well-defined homomorphism. The dual morphism $Y\to C(X/Y)$ is right inverse to~$\pi$, i.e., the desired section.
\end{proof}

\begin{lemma}\label{vec.bundle}
Let $\pi:C\to Y$ be a cone over~$Y$, i.e., an affine morphism such that $\pi_*\Oo_C=\bigoplus_{n=0}^{\infty}\Cc_n$ is a positively graded $\Oo_Y$-algebra sheaf generated by $\Cc_0=\Oo_Y$ and~$\Cc_1$. Suppose that $C$ contains a closed subscheme $V$ which is a vector bundle over $Y$ and the fibers $C_y=V_y$ at some point $y\in Y$ coincide. Then $C=V$ over a neighborhood of~$y$.
\end{lemma}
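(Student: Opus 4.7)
The plan is to work locally on $Y$ and reduce the problem to the elementary fact that a surjective endomorphism of a Noetherian ring is an isomorphism.

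Since the statement is local on $Y$, I shrink $Y$ to a Noetherian affine neighborhood of $y$ and trivialize $V$, so that $Y=\Spec A$ (with maximal ideal $\m$ at $y$ and residue field $k=A/\m$), $V=\Spec A[x_1,\ldots,x_r]$, and $C=\Spec B$ where $B=\bigoplus_{n\ge 0}B_n$ is positively graded with $B_0=A$ generated by~$B_1$. The inclusion $V\hookrightarrow C$ corresponds to a surjective $A$-algebra homomorphism $\phi\colon B\twoheadrightarrow A[x_1,\ldots,x_r]$, and the hypothesis $V_y=C_y$ is exactly the condition that $\bar\phi\colon B/\m B\to k[x_1,\ldots,x_r]$ is an isomorphism.

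The key numerical input is $\dim_k B_1/\m B_1=r$. Indeed, $B/\m B$ is a positively graded $k$-algebra generated in degree $1$, so its graded maximal ideal $\mathfrak{n}=\bigoplus_{n\ge 1}B_n/\m B_n$ satisfies $\mathfrak{n}/\mathfrak{n}^2=B_1/\m B_1$; under $\bar\phi$, $\mathfrak{n}$ becomes a maximal ideal of $k[x_1,\ldots,x_r]$ with residue field $k$, whose cotangent space has dimension $r$. I then lift a $k$-basis of $B_1/\m B_1$ to $b_1,\ldots,b_r\in B_1$; by Nakayama's lemma and a further shrinking of $Y$, these generate $B_1$ over $A$ and hence generate $B$ as an $A$-algebra, yielding a surjection $\alpha\colon A[b_1,\ldots,b_r]\twoheadrightarrow B$. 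Composing gives a surjective $A$-algebra homomorphism $\psi=\phi\circ\alpha\colon A[b_1,\ldots,b_r]\twoheadrightarrow A[x_1,\ldots,x_r]$ between two polynomial rings in $r$ variables over $A$.

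The main obstacle is to upgrade $\psi$ to an isomorphism. Identifying source with target via $b_i\leftrightarrow x_i$, $\psi$ becomes a surjective endomorphism of the Noetherian ring $R=A[x_1,\ldots,x_r]$; the ascending chain $\ker\psi\subset\ker\psi^2\subset\cdots$ stabilizes by Noetherianity, and the standard argument (given $x\in\ker\psi$, write $x=\psi^n(y)$ with $\psi$ surjective, whence $y\in\ker\psi^{n+1}=\ker\psi^n$ and $x=0$) forces $\ker\psi=0$. Since $\psi=\phi\circ\alpha$ is thus an isomorphism with both factors surjective, both $\phi$ and $\alpha$ must be isomorphisms, whence $C=V$ over the chosen neighborhood of~$y$.
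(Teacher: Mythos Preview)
Your proof is correct and reaches the same conclusion, but by a genuinely different route from the paper's. The paper works graded piece by piece: the closed immersion $V\hookrightarrow C$ gives $\Oo_Y$-module epimorphisms $\Cc_n\twoheadrightarrow\Sym_{\Oo_Y}^n\Vv^*$; since each target is locally free and the fiber map at $y$ is an isomorphism, Nakayama applied to the kernel forces the map to be an isomorphism in a neighborhood of~$y$. You instead sandwich $B$ between two polynomial rings in the same number of variables and invoke the fact that a surjective endomorphism of a Noetherian ring is injective. The paper's argument is shorter, though as written it tacitly relies on generation in degree~$1$ (or on Noetherianity of $\bigoplus\Cc_n$) to secure a single neighborhood working for all~$n$. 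Your argument is pleasantly self-contained, and your cotangent-space computation of $\dim_k B_1/\m B_1$ has the minor bonus that it does not use gradedness of $\phi$, only that $V\hookrightarrow C$ lies over~$Y$. Both approaches silently require $\Cc_1$ (equivalently $B_1$) to be a coherent $\Oo_Y$-module so that Nakayama applies; this holds in the paper's application to normal cones but is not explicit in the lemma's hypotheses.
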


\begin{proof}
There are $\Oo_Y$-module epimorphisms $\Cc_n\to\Sym_{\Oo_Y}^n\Vv^*$, where $\Vv$ is the sheaf of sections of $V$ over~$Y$. As $C_y=V_y$, these epimorphisms are isomorphisms at~$y$. Since $\Vv^*$ is locally free, they are isomorphisms in a neighborhood of $y$ by Nakayama's lemma.
\end{proof}

\begin{proposition}\label{fixed.pts}
There is a unique irreducible component $\Zz^{\circ}$ of $\Zz$ intersecting~$\So^{\gamma}$. It is smooth along $\So^{\gamma}$ and $\dim\Zz^{\circ}=\dim\So^{\gamma}+m$.
\end{proposition}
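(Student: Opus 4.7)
The strategy is to prove the proposition via the deformation $\widehat\Zz$, exploiting the smooth description of its special fiber $\Zz_0\simeq(\overline{M}\cap\Ru{P})\times T^*(Z\cap S)$. This fiber is smooth and irreducible of dimension $d:=\dim\So^\gamma+m$, and contains $\So^\gamma$ as the zero section of its $T^*$-factor. One first checks that $\So^\gamma\subset\Zz$ (since $\Phi$ vanishes on $S\supset\So^\gamma$ and $\So^\gamma\subset\Mo^\gamma$ by definition), hence $\So^\gamma\times\AAA^1\subset\widehat\Zz$. The plan is to transfer the local structure from $\Zz_0$ to $\Zz=\Zz_1$ through the family~$\widehat\Zz$ by means of a normal-cone calculation supplied by Lemmata~\ref{sec.cone} and~\ref{vec.bundle}.

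Concretely, set $Y:=\So^\gamma\times\AAA^1$ and consider the normal cone $C:=C(\widehat\Zz/Y)$. By Lemma~\ref{sec.cone}, each derivation $\nabla\widehat{F}_i$ of $\Oo_{\widehat\Zz}$ produces a section of~$C$. The stated linear independence of the $\nabla\widehat{F}_i$ modulo $T(\So\times\AAA^1)\supset TY$ implies that these $m$ sections are fiberwise linearly independent everywhere on~$Y$, so they span a trivial rank-$m$ vector subbundle $V\subset C$. To apply Lemma~\ref{vec.bundle}, it remains to verify $C_{p_0}=V_{p_0}$ at one point $p_0=(q_0,0)\in\So^\gamma\times\{0\}$ with $q_0\in\So^\gamma\cap Z$; equivalently, to show $\dim C_{p_0}\leq m$.

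For this upper bound I would work inside the smooth ambient scheme $(\widehat\Mo)^\gamma\supset\widehat\Zz$ and compute the codimension of $\widehat\Zz$ inside it cut out by the equations $\widehat\Phi^*\xi$, $\xi\in\p_0$, reducing the count to the special fiber where Theorem~\ref{horosph} (applied to $\So$ in the Local Structure notation) identifies $\Zz_0$ with the conormal bundle of the $U$-foliation of $\So$ restricted to $\So^\gamma$, yielding the expected codimension. Once $C=V$ near $p_0$, the scheme $\widehat\Zz$ is smooth of dimension $d+1$ along $Y$ in a neighborhood of~$p_0$, and the $L$-equivariance of the construction propagates smoothness to all of~$\So^\gamma\times\AAA^1$. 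The unique irreducible component $\widehat\Zz^\circ$ through $Y$ is then smooth of dimension $d+1$ and dominates~$\AAA^1$; any alternative component meeting $Y$ would have to lie in $M_0$, hence inside the irreducible $\Zz_0\subsetneq\widehat\Zz^\circ$, a contradiction. Restricting to $t=1$ yields $\Zz^\circ:=\widehat\Zz^\circ\cap M_1$, the desired unique irreducible component of $\Zz$ meeting $\So^\gamma$, smooth of dimension $d$ along it. The main obstacle is the upper bound $\dim C_{p_0}\leq m$, which requires careful bookkeeping of the independent relations among the differentials of the $\widehat\Phi^*\xi$ at~$p_0$ in the presence of $\gamma$-invariance.
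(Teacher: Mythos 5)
Your proposal follows the paper's architecture (the family $\widehat{\Zz}$, sections of the normal cone via Lemma~\ref{sec.cone}, Lemma~\ref{vec.bundle}, transfer from the explicit special fiber), but the step you yourself flag as ``the main obstacle'' is precisely the heart of the proof, and the route you sketch for it does not close the gap. First, the hypothesis of Lemma~\ref{vec.bundle} is a \emph{scheme-theoretic} equality of fibers $C_{p_0}=V_{p_0}$ (this is what the Nakayama argument consumes); it is not equivalent to the dimension bound $\dim C_{p_0}\leq m$, since $C_{p_0}$ always contains the $m$-dimensional linear space $V_{p_0}$ but could in addition carry embedded components or extra components of dimension $\leq m$. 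Second, your method for the bound cannot work as stated: counting the equations $\widehat\Phi^*\xi$, $\xi\in\p_0$, bounds the codimension of components of $\widehat{\Zz}$ from above, i.e.\ its dimension from \emph{below}, which is the wrong direction; and ``bookkeeping of the independent relations among the differentials at $p_0$'' controls the Zariski tangent space $T_{p_0}\widehat{\Zz}$, not the fiber of the normal cone --- the two agree only if $\widehat{\Zz}$ is already known to be smooth at $p_0$, which is essentially what is being proved. Even the local dimension bound $\dim_{p_0}\widehat{\Zz}\leq\dim\So^{\gamma}+m+1$ (which does follow from the special fiber, since $\widehat{\Zz}\cap M_0=\Zz_0$) does not yield $\dim C_{p_0}\leq m$: fibers of normal cones jump at special points, e.g.\ a component of $\widehat{\Zz}$ whose $t=0$ fiber collapses into $\So^{\gamma}\times\{0\}$ would inflate $C_{p_0}$ while being invisible in $\Zz_0$.

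The paper's verification is of a different, softer nature: no equation count occurs at all. Because $\Zz_0\simeq(\overline{M}\cap\Ru{P})\times T^*(Z\cap S)$ is known explicitly (from the $T$-equivariant isomorphism $\Phi_0^{-1}(\p_0^{\ann})\cap T^*\So\simeq\Ru{P}\times T^*(Z\cap S)$ recorded in~\ref{proof} when $\widehat{\Zz}$ is constructed --- not from a fresh application of Theorem~\ref{horosph}), it is smooth, contains $\So^{\gamma}$ as the zero section, and its normal bundle is trivialized by $\nabla\widehat{F}_i|_{M_0}=-df_i$; hence the scheme-level identity $C(\Zz_0/\So^{\gamma})=\Vv_0$ holds over \emph{all} of $\So^{\gamma}$, and Lemma~\ref{vec.bundle} is then applied to $C(\widehat{\Zz}/\So^{\gamma}\times\AAA^1)\supset\widehat{\Vv}$, the resulting neighborhood being spread over the whole of $\So^{\gamma}\times\AAA^1$ (in particular to $t=1$) by the $\kk^{\times}$-action on the deformation parameter. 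Your substitute for this last spreading step, ``$L$-equivariance,'' is also insufficient: $\overline{M}\cap P$ does not act transitively on $\So^{\gamma}$ in general, so a verification at the single point $p_0$ plus equivariance does not give smoothness of $\Zz^{\circ}$ along all of $\So^{\gamma}$. In short: same skeleton as the paper, but the decisive fiber identification is left unproved, and the tools you propose for it (equation/Jacobian counts, group translation) cannot supply it; what is needed is the scheme-level description of the special fiber, as in the paper.
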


\begin{proof}
By Lemma~\ref{sec.cone}, $C(\Zz/\So^{\gamma})$ contains a closed subscheme $\Vv$ which is a trivial vector bundle over $\So^{\gamma}$ with trivializing sections~$\nabla{F_i}|_{\So^{\gamma}}$. We shall prove that $C(\Zz/\So^{\gamma})=\Vv$, which will imply all assertions.

In order to do this, we contract $\Zz$ to $\Zz_0$ inside the family ${\widehat\Zz\to\AAA^1}$. For $\Zz_0$ (i.e., in the case $M=T^*S$) the above claim is true. In particular, $C(\Zz_0/\So^{\gamma})=\Vv_0$ is the zero fiber of the family $\widehat\Vv\to\AAA^1$ of vector bundles defined like $\Vv$ by Hamiltonian vector fields~$\nabla\widehat{F}_i$ on $\So^{\gamma}\times\AAA^1$. By Lemma~\ref{vec.bundle}, $C(\widehat\Zz/\So^{\gamma}\times\AAA^1)=\widehat\Vv$ (here we use $\kk^{\times}$-equivariance to spread a neighborhood of $\So^{\gamma}\times\{0\}$ over the whole $\So^{\gamma}\times\AAA^1$), whence $C(\Zz/\So^{\gamma})=\Vv$.
\end{proof}

\begin{remark}
The geometric idea behind the construction of $\Zz^{\circ}$ is to spread $\So^{\gamma}$ along the trajectories of Hamiltonian vector fields~$\nabla{F_i}$. The above reasoning is an adaptation of this idea to algebraic geometry.
\end{remark}

Recall the following result of Bia{\l}ynicki-Birula:

\begin{theorem}[{\cite[Thm.\,4.1]{BB-dec}}]\label{BB-dec}
Let a multiplicative 1-parameter group $\gamma$ act on a smooth variety~$X$.
\begin{enumerate}
  \item The set of fixed points $X^{\gamma}$ is a smooth closed subset of~$X$.
  \item For any $y\in X^{\gamma}$ consider the decomposition
\begin{equation*}
    T_yX=T_y^+X\oplus T_y^0X\oplus T_y^-X
\end{equation*}
    into the sum of $\gamma$-eigenspaces of positive, zero, and negative eigenweights, respectively.
    Then $T_y(X^{\gamma})=T_y^0X$.
  \item For each irreducible (=connected) component $X_i$ of $X^{\gamma}$ the set
\begin{equation*}
      X_i^+=\{x\in{X}\mid\exists\lim_{t\to0}\gamma(t)x\in X_i\}.
\end{equation*}
    is a locally closed subvariety in $X$, and the map $X_i^+\to X_i$, $x\mapsto\lim_{t\to0}\gamma(t)x$ turns $X_i^+$
    into a locally trivial fibration over $X_i$ with fibers $(X_i^+)_y\simeq T^+_yX$ over $y\in X_i$.
\end{enumerate}
\end{theorem}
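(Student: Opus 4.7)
The approach is to reduce all three assertions to a local analysis at a fixed point, via a $\gamma$-equivariant {\'e}tale chart built from the weight decomposition of the tangent space. By Sumihiro's theorem every point of $X$ admits a $\gamma$-stable affine open neighborhood, so we may assume $X=\Spec A$ is affine. The $\gamma$-action grades $A=\bigoplus_{n\in\ZZ}A_n$, so $X^{\gamma}$ is cut out by the ideal generated by $\bigoplus_{n\ne0}A_n$ and is in particular closed in $X$.

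For (1) and (2), fix $y\in X^{\gamma}$. The maximal ideal $\m_y\subset\Oo_{X,y}$ is $\gamma$-stable; by linear reductivity of $\kk^{\times}$ it decomposes into weight spaces, and so does the cotangent space $\m_y/\m_y^2$, which is the dual of $T_yX=T_y^+X\oplus T_y^0X\oplus T_y^-X$. A weight basis of $\m_y/\m_y^2$ lifts to homogeneous elements $f_1,\dots,f_n\in\m_y$ of the same weights (a $\gamma$-equivariant Nakayama argument), assembling into a $\gamma$-equivariant morphism $F\colon(X,y)\to(T_yX,0)$ with identity differential at~$y$. Because $X$ is smooth, $F$ is {\'e}tale at $y$. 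Equivariance together with {\'e}taleness forces $F^{-1}((T_yX)^{\gamma})=X^{\gamma}$ locally near $y$ (since fibers of $F$ are discrete, any $\gamma$-orbit in a fiber is a point). As $(T_yX)^{\gamma}=T_y^0X$ is a linear subspace, this proves simultaneously that $X^{\gamma}$ is smooth at $y$ and that $T_y(X^{\gamma})=T_y^0X$, establishing (1) and (2).

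For (3) we keep the same {\'e}tale model. On $T_yX$ a vector $v=v_++v_0+v_-$ has $\lim_{t\to 0}\gamma(t)v$ existing iff $v_-=0$, and in that case the limit equals $v_0$. Hence $X_i^+$ is locally the preimage under $F$ of the smooth linear subvariety $T_y^+X\oplus T_y^0X\subset T_yX$, and the limit map $X_i^+\to X_i$ is locally modelled by the linear projection $T_y^+X\oplus T_y^0X\to T_y^0X$. This simultaneously yields local closedness and smoothness of $X_i^+$ and a local trivialization of $X_i^+\to X_i$ with fiber $T_y^+X$; these local pictures glue canonically over $X_i$ because the weight decomposition of $T_yX$ is intrinsic to the $\gamma$-action.

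The main technical obstacle is the construction of the $\gamma$-equivariant {\'e}tale chart~$F$. This rests on two ingredients: Sumihiro's theorem, used to reduce to an affine $\gamma$-stable chart in which the coordinate ring is $\ZZ$-graded, and the equivariant lifting of a weight basis of $\m_y/\m_y^2$ to homogeneous generators of $\m_y$, which requires linear reductivity of $\kk^{\times}$ in order to split the short exact sequence $0\to\m_y^2\to\m_y\to\m_y/\m_y^2\to0$ of $\gamma$-modules. Once this local linearization is available, all three parts of the theorem become transparent linear algebra on the weight-graded tangent space.
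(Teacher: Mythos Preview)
First, note that the paper does not prove this theorem: it is quoted from Bia{\l}ynicki-Birula's article and used as a black box, so there is no proof in the paper to compare yours against. Your plan follows the standard route and handles parts (1) and~(2) correctly.

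For part~(3) there is a genuine gap. An \'etale $\gamma$-equivariant chart $F$ gives only an \'etale-local model, and two nontrivial points are glossed over. First, the equality $X_i^+=F^{-1}(T_y^+X\oplus T_y^0X)$ near $y$ is not automatic: limits do not in general lift through \'etale maps, so the implication ``$\lim_{t\to0}\gamma(t)F(x)$ exists $\Rightarrow$ $\lim_{t\to0}\gamma(t)x$ exists'' needs its own argument (e.g.\ show that the ideal generated by all negative-weight functions agrees, in $\Oo_{X,y}$, with the ideal generated by your negatively weighted~$f_i$, via a graded Nakayama/Krull-intersection argument). Second, and more seriously, even granting this, $F$ only exhibits $X_i^+\to X_i$ as trivial after the \'etale base change $F|_{X_i}\colon X_i\to T_y^0X$; the theorem asserts \emph{Zariski}-local triviality. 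The missing ingredient is a lemma of the following shape: a $\gamma$-equivariant \'etale morphism between varieties on which every point flows to the fixed locus, restricting to an isomorphism on fixed loci, is itself an isomorphism. Applying this to the map $X_i^+\to X_i\times T_y^+X$, $x\mapsto\bigl(\lim_{t\to0}\gamma(t)x,\ (f_j(x))_{w_j>0}\bigr)$, produces the Zariski trivialization. Your closing remark that constructing $F$ is ``the main technical obstacle'' undersells these steps; the phrase ``these local pictures glue canonically'' is where the real content of~(3) hides.
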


Now we define $\Ww=\{p\in M\mid\exists\lim_{t\to0}\gamma(t)p\in\Zz^{\circ}\}$. By Theorem~\ref{BB-dec}, $\Ww$~is a subvariety in $M$ (in fact, a locally trivial fibration into affine spaces over~$\Zz^{\circ}$). Since $\Zz^{\circ}$ is $(\overline{M}\cap P)$-stable and the conjugation by $\gamma(t)$ contracts $\overline{P}$ to $\overline{M}\cap P$ as $t\to0$, $\Ww$~is $\overline{P}$-stable. It can be included in a family $\widehat\Ww\to\AAA^1$ of subvarieties $\Ww_c=\widehat\Ww\cap\Mo_c$ defined in the same way. Note that $\Ww_0=\Uu$ is the conormal bundle of the foliation of $\overline{U}$-orbits in $\overline{U}(Z\cap S)$. Indeed, this stems from the $\gamma$-equivariant isomorphisms
\begin{align*}
\begin{split}
T^*\So &\simeq (\Ru[-]{\overline{Q}}\cap\Ru{P}) \times (\Ru{\overline{Q}}\cap\Ru{P}) \times (\overline{M}\cap\Ru{P}) \times T^*(Z\cap S) \times \strut\\
&\hphantom{\strut\simeq\strut} \times (\Ru{\overline\q}\cap\Ru[-]\p) \times (\Ru[-]{\overline\q}\cap\Ru[-]\p) \times (\overline\m\cap\Ru[-]\p),                                                                                         \end{split}
\\
\Uu &\simeq (\Ru{\overline{Q}}\cap\Ru{P}) \times (\overline{M}\cap\Ru{P}) \times T^*(Z\cap S) \times (\Ru{\overline\q}\cap\Ru[-]\p),
\\
\Zz_0 &\simeq (\overline{M}\cap\Ru{P}) \times T^*(Z\cap S)
\end{align*}
by observing that $\gamma$ acts on $\Ru{\overline{Q}}\cap\Ru{P}\simeq\Ru{\overline\q}\cap\Ru\p$ and $\Ru{\overline\q}\cap\Ru[-]\p$ with positive eigenweights, on $\Ru[-]{\overline{Q}}\cap\Ru{P}\simeq\Ru[-]{\overline\q}\cap\Ru\p$ and $\Ru[-]{\overline\q}\cap\Ru[-]\p$ with negative eigenweights, and trivially on $\overline{M}\cap\Ru{P}\simeq\overline\m\cap\Ru\p$, $\overline\m\cap\Ru[-]\p$, and $T^*(Z\cap S)$. (In the above isomorphisms we use the identifications $T^*\So\simeq\Ru{P}\times T^*\So|_{Z\cap S}$, $T^*_z\So\simeq T^*_z(Z\cap S)\oplus(\Ru{\p}z)^*$, and $(\Ru{\p}z)^*\simeq\Ru\p^*\simeq\Ru[-]\p$, $\forall z\in Z\cap S$.)

Theorem~\ref{mom(Lagr)} stems from the following proposition, together with Theorem~\ref{horosph}.

\begin{proposition}\label{def.horosph}
$\overline{\Phi(\Zz^{\circ})}=\ab\oplus(\overline\m\cap\Ru\p)$, $\overline{\Phi(\Ww)}=\overline\p_0^{\ann}$, and $G\Ww$ is dense in~$M$.
\end{proposition}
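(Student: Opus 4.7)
The proposition is the Hamiltonian analogue of Theorem~\ref{horosph}: the cotangent bundle $T^*Y$ is replaced by an arbitrary Hamiltonian $G$-variety $M$, the conormal bundle $\Uu$ by $\Ww$, and the base $Y$ by the Lagrangian $S$. My strategy is to follow the proof of Theorem~\ref{horosph} step by step, using Proposition~\ref{fixed.pts} as a replacement for the explicit geometry of $\Uu$ and using the deformation $\widehat{M}\to\AAA^1$ (whose zero fiber is $T^*S$) as a base case where Theorem~\ref{horosph} applies directly.

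\emph{Step~1: $\overline{\Phi(\Zz^{\circ})}=\ab\oplus(\overline\m\cap\Ru\p)$.} The inclusion $\subset$ is immediate: $\Zz\subset\Phi^{-1}(\p_0^{\ann})\cap\Mo^{\gamma}$, so the $G$-equivariance of $\Phi$ (with $\gamma$ acting trivially on $\g^{\gamma}=\overline\m$ under $\g\simeq\g^*$) gives $\Phi(\Zz)\subset\p_0^{\ann}\cap\overline\m=\ab\oplus(\overline\m\cap\Ru\p)$. For density, Proposition~\ref{fixed.pts} yields $T_y\Zz^{\circ}=T_y\So^{\gamma}+\mathrm{span}\{\nabla F_i|_y\}$ at $y\in\So^{\gamma}$. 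Since $\Phi|_S\equiv0$, the differential $d_y\Phi$ annihilates $T_y\So^{\gamma}$, so $d_y\Phi(T_y\Zz^{\circ})$ is spanned by the functionals
\[
d_y\Phi(\nabla F_i|_y)\colon\xi\longmapsto-\{F_i,\Phi^*\xi\}(y)=-dF_i|_y(\xi y).
\]
Taking $y\in Z\cap S$ generic and restricting to $\xi\in\ab$, the map $\ab\to T_y(Z\cap S)$, $\xi\mapsto\xi y$, is injective (freeness of the $A$-action from the Local Structure Theorem), while $\{df_i|_y\}$ span $T_y^*(Z\cap S)$ (algebraic independence of the $f_i$); hence the functionals project surjectively onto $\ab^*\simeq\ab$. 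Thus $\Phi(\Zz^{\circ})$ contains some generic $\zeta_0\in\ab$ with $\z_{\overline\m}(\zeta_0)=\lv$. Since $\Zz^{\circ}$ is $(\overline M\cap P)$-stable, so is $\Phi(\Zz^{\circ})$; mimicking steps 3--4 of the proof of Theorem~\ref{horosph}, the unipotent $(\overline M\cap\Ru{P})$-orbit $\zeta_0+(\overline\m\cap\Ru\p)$ is closed in $\overline\m$ by~\cite[1.3]{inv}, hence contained in $\Phi(\Zz^{\circ})$. Varying $\zeta_0$ gives density.

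\emph{Steps~2 and 3: $\overline{\Phi(\Ww)}=\overline\p_0^{\ann}$ and $G\Ww$ dense in $M$.} By Theorem~\ref{BB-dec}, $\Ww\to\Zz^{\circ}$ is a locally trivial fibration with fiber $T_y^+M$. At $y\in\So^{\gamma}$ the positive-weight tangent directions include those coming from $(\Ru{\overline Q}\cap\Ru{P})y$ and the remaining $\gamma$-positive sources, whose infinitesimal moment-map images pick up $\Ru{\overline\p}\cap\Ru\p$ and $\Ru{\overline\p}\cap\Ru[-]\p$ components; combined with the $\ab\oplus(\overline\m\cap\Ru\p)$ of Step~1 they fill out $\ab\oplus\Ru{\overline\p}=\overline\p_0^{\ann}$, exactly paralleling the surjection in the proof of Theorem~\ref{horosph}. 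Rigorously, this is transported from the $c=0$ fiber $\Ww_0=\Uu$ (where Theorem~\ref{horosph} applies directly) via the $\kk^{\times}$-equivariant family $\widehat\Ww\to\AAA^1$, whose weight-space decomposition of the $T_y^+M$ is rigid across $c$. Density of $G\Ww$ then follows as in the final paragraph of the proof of Theorem~\ref{horosph}: by~\cite[5.5]{Ad}, $\Ru[-]{\overline P}$ acts on generic points of $\overline\p_0^{\ann}$ with trivial stabilizer and transverse orbits, hence on generic points of $\Ww$ via $\Phi$-equivariance; and $\codim_M\Ww=\dim\Ru[-]{\overline P}$ (inherited from $\codim_{T^*S}\Uu$ across the family, since $\dim\Ww_c$ is constant in $c$) gives $\dim\Ru[-]{\overline P}\Ww=\dim M$.

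\emph{Main obstacle.} The delicate point is Step~2: the $\kk^{\times}$-action only scales the moment map image rather than contracting it, so a naive limiting argument will not deduce $\overline{\Phi(\Ww)}=\overline\p_0^{\ann}$ from $\overline{\Phi_0(\Uu)}=\overline\p_0^{\ann}$. The cleanest remedy is to reduce all moment-image computations to infinitesimal data at $\So^{\gamma}$ via Proposition~\ref{fixed.pts} and Theorem~\ref{BB-dec}: the tangent-space decomposition at $y\in\So^{\gamma}$ is $\kk^{\times}$-rigid across the family, so the $c=0$ calculation supplied by Theorem~\ref{horosph} transfers to $c=1$ without change.
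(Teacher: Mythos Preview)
Your Step~1 is a correct direct argument (with one wording slip: the differential shows that $\Phi(\Zz^{\circ})$ contains some $\zeta=\zeta_0+\zeta_+$ with $\zeta_0\in\ab$ generic, not $\zeta_0$ itself; but then the $(\overline M\cap\Ru{P})$-orbit of $\zeta$ is still $\zeta_0+(\overline\m\cap\Ru\p)$, so the conclusion stands). Note however that the paper does \emph{not} prove this first; it derives $\overline{\Phi(\Zz^{\circ})}=\ab\oplus(\overline\m\cap\Ru\p)$ \emph{after} establishing $\overline{\Phi(\Ww)}=\overline\p_0^{\ann}$, simply by applying the $\gamma$-contraction $\Ww\to\Zz^{\circ}$ to the latter. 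Your Step~3 matches the paper.

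The genuine gap is Step~2. You correctly identify the obstacle---the $\kk^{\times}$-action rescales $\Phi$, so one cannot naively pass to the limit---but your proposed remedy does not close it. You assert that the ``tangent-space decomposition at $y\in\So^{\gamma}$ is $\kk^{\times}$-rigid across the family, so the $c=0$ calculation supplied by Theorem~\ref{horosph} transfers to $c=1$.'' However, Theorem~\ref{horosph} is a \emph{global} statement about $\overline{\Phi_0(\Uu)}$; it does not assert that $d_y\Phi_0(T_y\Uu)=\overline\p_0^{\ann}$ at a point of the zero section. Even granting that $d_y\Phi_c(T_y\Ww_c)$ is the same subspace for all $c$ (which is plausible since both summands $d_y\Phi_c(T_y\Zz_c^{\circ})$ and $d_y\Phi_c(T_y^+M_c)=((\g_y)^{\ann})^+$ can be computed intrinsically), you have not shown that this common subspace is all of $\overline\p_0^{\ann}$; that would require, e.g., computing $(\g_y+\overline\un)^{\ann}$ for generic $y\in Z\cap S$, which you do not do. Your phrase ``the remaining $\gamma$-positive sources, whose infinitesimal moment-map images pick up $\Ru{\overline\p}\cap\Ru\p$ and $\Ru{\overline\p}\cap\Ru[-]\p$ components'' is an assertion, not an argument.

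The paper's mechanism is different and avoids all infinitesimal computation. First one checks the \emph{inclusion} $\Phi_c(\Ww_c)\subset\overline\p_0^{\ann}$ for every $c$: since $\gamma$ contracts $\Ww_c$ onto $\Zz_c^{\circ}$ and $\Phi_c(\Zz_c^{\circ})\subset\p_0^{\ann}\cap\overline\m$, the image $\Phi_c(\Ww_c)$ lies in $(\p_0^{\ann}\cap\overline\m)\oplus\Ru{\overline\q}=\overline\p_0^{\ann}$. Then one applies the fiber dimension theorem to the total map $\widehat\Phi:\widehat\Ww\to\overline\p_0^{\ann}\times\AAA^1$: at $c=0$ the general fiber has dimension $\dim\Ww_0-\dim\overline\p_0^{\ann}$ by Theorem~\ref{horosph}, so the general fiber of $\widehat\Phi$ (and hence of $\Phi:\Ww\to\overline\p_0^{\ann}$, using $\kk^{\times}$-equivariance to pass from generic $c$ to $c=1$) has the same dimension, forcing $\Phi|_{\Ww}$ to be dominant. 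This semicontinuity-of-fiber-dimension argument is the key idea your proposal is missing.
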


\begin{proof}
Recall that $\gamma$ acts on $\Ru[\pm]{\overline\q}$ with positive/negative eigenweights and on $\overline\m$ trivially.
First note that $\Phi(\Zz^{\circ})\subset\p_0^{\ann}\cap\overline\m=\ab\oplus(\overline\m\cap\Ru\p)$. Since $\gamma$ contracts $\Ww$ onto~$\Zz^{\circ}$, it contracts $\Phi(\Ww)$ to~$\ab\oplus(\overline\m\cap\Ru\p)$, whence $\overline{\Phi(\Ww)}\subset\ab\oplus(\overline\m\cap\Ru\p)\oplus\Ru\q=\overline\p_0^{\ann}$. The same holds for $\Ww_c$ instead of~$\Ww$. But $\overline{\Phi_0(\Ww_0)}=\overline\p_0^{\ann}$ by Theorem~\ref{horosph}, whence general fibers of $\Phi_0:\Ww_0\to\overline\p_0^{\ann}$ have dimension $\dim\Ww_0-\dim\overline\p_0^{\ann}$. By the fiber dimension theorem, general fibers of $\widehat\Phi:\widehat\Ww\to\overline\p_0^{\ann}\times\AAA^1$ and $\Phi:\Ww\to\overline\p_0^{\ann}$ have the same dimension, whence $\overline{\Phi(\Ww)}=\overline\p_0^{\ann}$. It then follows by $\gamma$-contraction that $\overline{\Phi(\Zz^{\circ})}=\overline\p_0^{\ann}\cap\overline\m=\ab\oplus(\overline\m\cap\Ru\p)$. The proof of $\overline{G\Ww}=M$ is the same as in Theorem~\ref{horosph}: the only facts which we use are $\overline{\Phi(\Ww)}=\overline\p_0^{\ann}$ and $\codim\Ww=\codim\Uu=\dim\Ru[-]{\overline{P}}$.
\end{proof}

\begin{remark}
If $M=L$ (e.g., $S$~is quasiaffine), then $\Ww=\Zz^{\circ}$ is the irreducible component of $\Phi^{-1}(\p_0^{\ann})$ intersecting~$\So$, and the proof of Theorem~\ref{mom(Lagr)} simplifies.
\end{remark}

\subsection{}

We conclude this section with a result on the $G$-action on the zero fiber
of the moment map.
%
%
%
%
Let $M$ be a Hamiltonian $G$-variety and $S\subset M$ be a closed $G$-stable smooth isotropic subvariety. We may assume that $\Phi(S)=\{0\}$.

\begin{definition}
The \emph{nullcone} of $M$ with respect to $S$ is the set
\begin{equation*}
\N=\{p\in M\mid\overline{Gp}\cap S\ne\emptyset\}.
\end{equation*}
\end{definition}

In the basic example where $M=T^*X$ for some smooth $G$-variety $X$ and $S$ is the zero section, $\N$~is a cone bundle over $X$ consisting of covectors which are contracted to $0$ by~$G$.

\begin{proposition}\label{nullcone}
If $M$ is affine, then (the smooth locus of) every irreducible component of $\Phi^{-1}(0)\cap\N$ is an isotropic subvariety.
\end{proposition}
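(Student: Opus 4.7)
The proof plan combines the Hilbert--Mumford criterion with a Bialynicki--Birula limit argument and the $G$-invariance of~$\omega$. Since $M$ is affine and $S$ is closed and $G$-stable, for every $p\in\N$ the Hilbert--Mumford theorem provides a one-parameter subgroup $\lambda:\kk^{\times}\to G$ with $q:=\lim_{t\to 0}\lambda(t)p\in S$; automatically $q\in S^{\lambda}:=S\cap M^{\lambda}$. Setting $\N_{\lambda}=\{p\in M\mid\lim_{t\to 0}\lambda(t)p\in S^{\lambda}\}$, one has $\N=\bigcup_{\lambda}G\N_{\lambda}$, where finitely many $G$-conjugacy classes of~$\lambda$ suffice (e.g.\ via the Kempf--Hesselink stratification). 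Let $\N_0$ be an irreducible component of $\Phi^{-1}(0)\cap\N$: it is $G$-stable (as $G$ is connected), and since a generic point of $\N_0$ lies in some $G\N_{\lambda}$, maximality and irreducibility force $\N_0=\overline{GY}$ for some irreducible component $Y$ of $\N_{\lambda}\cap\Phi^{-1}(0)$. It thus remains to establish isotropy of the smooth locus of~$\N_{\lambda}$ and propagate it through the $G$-action and closure.

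The key step is isotropy of $\N_{\lambda}$. Fix a smooth point $p\in\N_{\lambda}$ and set $q=\lim_{t\to 0}\lambda(t)p\in S^{\lambda}$. Applying Theorem~\ref{BB-dec} to both $M$ and to the smooth $\lambda$-stable subvariety~$S$, one chooses $\lambda$-equivariant local coordinates near~$q$ identifying a neighborhood of~$q$ in~$M$ with $T_qM=T_q^{+}M\oplus T_q^{0}M\oplus T_q^{-}M$, with $S^{\lambda}$ smooth at~$q$ and $T_qS^{\lambda}=T_q^{0}S$. In these coordinates $\N_{\lambda}$ becomes $T_q^{+}M\times S^{\lambda}$, so $T_p\N_{\lambda}\subset T_q^{+}M\oplus T_q^{0}S$, a subspace of non-negative $\lambda$-weights. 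Writing $v=v_{+}+v_0$ and $w=w_{+}+w_0$ for $v,w\in T_p\N_{\lambda}$ and exploiting the $\lambda$-invariance of~$\omega$,
\begin{equation*}
\omega_p(v,w)=\omega_{\lambda(t)p}(\lambda(t)_{*}v,\lambda(t)_{*}w),\qquad\forall t\in\kk^{\times}.
\end{equation*}
Letting $t\to 0$: $\lambda(t)p\to q$, $\lambda(t)_{*}v_{+}\to 0$, $\lambda(t)_{*}v_0=v_0$, and similarly for~$w$. By regularity of~$\omega$ the limit exists, giving $\omega_p(v,w)=\omega_q(v_0,w_0)=0$, the last equality because $T_qS^{\lambda}\subset T_qS$ and $S$ is isotropic.

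To finish, I propagate isotropy to $GY$ and then to its closure. At a smooth point $p\in Y$ one has $T_p(GY)=\g p+T_pY$; this sum is isotropic because $\g p$ is isotropic (since $\omega(\xi p,\eta p)=\langle d_p\Phi(\eta p),\xi\rangle=0$ when $\Phi(p)=0$), $\g p$ is $\omega$-orthogonal to~$T_pY$ (as $T_pY\subset T_p\Phi^{-1}(0)=(\g p)^{\sort}$), and $T_pY$ is isotropic by the previous step. So $GY$ is isotropic on its smooth locus, and $\overline{GY}=\N_0$ inherits isotropy on its smooth locus: on the open subvariety $\N_0^{\mathrm{sm}}$ the tangent bundle is continuous and $\omega$ vanishes on tangent spaces of the dense subset $(GY)\cap\N_0^{\mathrm{sm}}$, hence on all of~$T\N_0^{\mathrm{sm}}$. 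The most delicate part is the Bialynicki--Birula step of the middle paragraph: one must arrange the $\lambda$-equivariant linearization near~$q$ compatibly for both $M$ and~$S$, and use the non-negative weight structure of~$T_p\N_{\lambda}$ to justify the passage to the limit as~$t\to 0$; the other steps are routine.
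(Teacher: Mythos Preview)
Your proof is correct and follows essentially the same route as the paper's: reduce to finitely many one-parameter subgroups via Hilbert--Mumford (the paper cites Birkes--Richardson), show each $\N_{\lambda}$ is isotropic using the Bia{\l}ynicki-Birula structure together with $\lambda$-invariance of~$\omega$, and then propagate isotropy to $G$-saturations inside $\Phi^{-1}(0)$ via the same moment-map identity $T_p(GY)=\g p+T_pY$ with $\g p\subset(\g p)^{\sort}$.

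The only notable difference is in the middle step. The paper argues that any $\gamma_i$-invariant form on $\N_i$ is pulled back from $S^{\gamma_i}$, via a non-negative grading on $\sigma_*\Omega^{\bullet}_{\N_i}$; you instead take a pointwise limit $\omega_p(v,w)=\lim_{t\to 0}\omega_{\lambda(t)p}(\lambda(t)_*v,\lambda(t)_*w)$. These are two phrasings of the same fact. Your version, as written, leans on a $\lambda$-equivariant linearization near~$q$ that goes slightly beyond what the paper's statement of Theorem~\ref{BB-dec} literally provides (the local trivializations there are not asserted to be $\lambda$-equivariant, and $p$ need not lie in an \'etale chart centered at~$q$). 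This is easy to repair without coordinates: the contraction $\sigma:\N_{\lambda}\to S^{\lambda}$ extends the family $p\mapsto\lambda(t)p$ regularly to $t=0$, so for any $v\in T_p\N_{\lambda}$ the limit $\lim_{t\to 0}\lambda(t)_*v$ exists and equals $d\sigma_p(v)\in T_qS^{\lambda}\subset T_qS$, whence $\omega_p(v,w)=\omega_q(d\sigma_p(v),d\sigma_p(w))=0$. With that tweak your argument and the paper's are equivalent.
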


\begin{proof}
Take any $p\in\N$. By standard facts of Invariant Theory \cite[4.4]{inv}, $\overline{Gp}$~contains a unique closed $G$-orbit $Gp_0\subset S$. By a theorem of Birkes--Richardson \cite[6.8]{inv}, there exists a 1-parameter subgroup $\gamma:\kk^{\times}\to G$ such that $\lim_{t\to0}\gamma(t)p\in Gp_0$. Moving $p,p_0$ inside their $G$-orbits, we may assume that $\gamma(t)\in T$ and $\lim_{t\to0}\gamma(t)p=p_0\in S^{\gamma}$. Furthermore, there are finitely many 1-parameter subgroups $\gamma_i:\kk^{\times}\to T$ such that $\N=\bigcup_iG\N_i$, where $\N_i=\{p\in M \mid
\exists\lim_{t\to0}\gamma_i(t)p\in S^{\gamma_i}\}$.

Let us prove that each irreducible component of each $\N_i$ is an isotropic subvariety. By Theorem~\ref{BB-dec}, $S^{\gamma_i}$~is smooth and $\N_i$ is a locally trivial fibration into affine spaces over~$S^{\gamma_i}$, whose tangent space at $p_0\in S^{\gamma_i}$ is $T_{p_0}\N_i=T_{p_0}^0S\oplus T_{p_0}^+M$. But this space is isotropic, because the symplectic form~$\omega$, being $\gamma_i$-invariant, can induce a nonzero pairing only between $\gamma_i$-eigenspaces of opposite eigenweights and $T_{p_0}^0S$ is isotropic.

Since $\gamma_i$ contracts $\N_i$ to~$S^{\gamma_i}$, the only $\gamma_i$-invariant forms on $\N_i$ are those pulled back from~$S^{\gamma_i}$. Indeed, the contraction map $\sigma:\N_i\to S^{\gamma_i}$, $p\mapsto\lim_{t\to0}\gamma_i(t)p$, is an affine morphism and $\sigma_*\Oo_{\N_i}$ is a negatively graded $\Oo_{S^{\gamma_i}}$-algebra sheaf containing $\Oo_{S^{\gamma_i}}$ as the graded component of degree~$0$. Hence $\sigma_*\Omega^{\bullet}_{\N_i}$ is negatively graded by the $\gamma_i$-action as well and its graded component of degree~$0$ is~$\Omega^{\bullet}_{S^{\gamma_i}}$, the sheaf of differential forms on~$S^{\gamma_i}$, as claimed. It follows that $T_p\N_i$ is isotropic for any $p\in\N_i$.

Hence the irreducible components of $\Phi^{-1}(0)\cap\N_i$ are isotropic subvarieties, too. We conclude by the following easy lemma:

\begin{lemma}
If $Z\subset\Phi^{-1}(0)$ is an isotropic subvariety, then $Y=\overline{GZ}$ is an isotropic subvariety as well.
\end{lemma}

A proof stems from a simple observation that $T_pY=\g{p}+T_pZ$ for general $p\in Z$ and $\omega(\xi{p},\nu)=\langle d\Phi_p(\nu),\xi\rangle=0$, $\forall\xi\in\g$, $\nu\in T_pY$, since $d\Phi$ vanishes on $Y\subset\Phi^{-1}(0)$.
\end{proof}


\section{Coisotropic subvarieties}

The aim of this section is to generalize Theorems \ref{c&r(Lagr)} and~\ref{mom(Lagr)} to a certain class of invariant coisotropic subvarieties in~$M$. Suppose that $S\subset M$ is a coisotropic subvariety. We retain the notation of \ref{deform} for this situation.

\begin{lemma}
$N\simeq(TS^{\sort})^*$.
\end{lemma}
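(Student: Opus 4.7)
The plan is to adapt the one-line argument of Lemma~\ref{N=T*S} to the coisotropic setting: I would show that $\omega$ induces a perfect pairing of vector bundles $(TS)^{\sort}\times N\to\kk$ over~$S$, which immediately gives the desired identification $N\simeq(TS^{\sort})^*$.

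Working pointwise at $p\in S$, set $V=T_pM$ and $W=T_pS$. The bilinear pairing $W^{\sort}\times V\to\kk$ coming from~$\omega_p$ has left kernel $W^{\sort}\cap V^{\sort}=0$, because $\omega_p$ is nondegenerate on~$V$. Its right kernel is the double skew-orthocomplement $(W^{\sort})^{\sort}=W$. Hence the pairing descends to a nondegenerate pairing $W^{\sort}\times(V/W)\to\kk$, and a dimension check confirms the ranks match, since $\dim W^{\sort}=\dim V-\dim W=\dim N_p$. This gives $N_p\simeq(W^{\sort})^*$.

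To globalize, $\omega$ is a global algebraic nondegenerate $2$-form, so it defines a vector bundle isomorphism $\omega^{\flat}\colon TM|_S\xrightarrow{\sim}T^*M|_S$, $v\mapsto\omega(v,\cdot)$, which sends the subbundle $(TS)^{\sort}\subset TM|_S$ precisely onto the annihilator $(TS)^{\perp}\subset T^*M|_S$; the latter, via the dual of the exact sequence $0\to TS\to TM|_S\to N\to 0$, is $N^*$. Hence $(TS)^{\sort}\simeq N^*$ as vector bundles on~$S$, and dualizing yields $N\simeq(TS^{\sort})^*$. No essential obstacle is anticipated; the one point that merits checking is that $(TS)^{\sort}$ is a genuine vector subbundle of $TM|_S$, but its fiber rank is the constant $\dim M-\dim S$ along~$S$, so this is automatic. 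I note in passing that the coisotropy hypothesis on~$S$ plays no role in the proof of this particular lemma itself; it is simply the running assumption of the section.
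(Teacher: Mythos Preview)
Your proof is correct and follows essentially the same approach as the paper, which simply states that the proof is the same as for Lemma~\ref{N=T*S}: the symplectic form induces a nondegenerate pairing between $(T_pS)^{\sort}$ and $T_pM/T_pS=N_p$. Your version spells out the details carefully (computing left and right kernels, checking constant rank for the subbundle, and globalizing via~$\omega^{\flat}$), and your closing remark that coisotropy plays no role in this particular lemma is accurate and worth noting.
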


A proof is the same as for Lemma~\ref{N=T*S}. Thus we have a natural surjective linear map $\psi:T^*S\to N$ of vector bundles over $S$ whose kernel is the annihilator of $(TS)^{\sort}\subset TS$.

The variety $\widehat{M}$ is equipped with a Poisson structure exactly in the same way as above. But $M_0$ is no longer a symplectic variety.

\begin{lemma}
The Poisson structure on $M_0=N$ descends from~$T^*S$, i.e., $\Oo_N\subset\psi_*\Oo_{T^*S}$ is closed under the Poisson bracket on $\Oo_{T^*S}$ and the restricted Poisson bracket on $\Oo_N$ coincides with~$\{\cdot,\cdot\}_0$.
\end{lemma}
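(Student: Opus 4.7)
The plan is to adapt, with only cosmetic changes, the proof of Lemma~\ref{def.sympl} to the coisotropic setting. First I would make the inclusion $\Oo_N\hookrightarrow\psi_*\Oo_{T^*S}$ explicit: since $N\simeq(TS^{\sort})^*$, the algebra $\Oo_N$ identifies with $\Sym_{\Oo_S}\Tt_S^{\sort}$ (where $\Tt_S^{\sort}$ denotes the sheaf of sections of the null distribution $TS^{\sort}$), while $\Oo_{T^*S}=\Sym_{\Oo_S}\Tt_S$. Dually to the inclusion $TS^{\sort}\hookrightarrow TS$, the pullback $\psi^*$ realizes $\Oo_N$ as the subalgebra generated over $\Oo_S$ by $\Tt_S^{\sort}$. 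Under the isomorphism $\Nn^*=\Ii_S/\Ii_S^2\to\Tt_S^{\sort}$, $f\mapsto\nabla f|_S$ (well-defined because coisotropy forces $\nabla f$ to be tangent to $S$ for $f\in\Ii_S$, and bijective by symplectic nondegeneracy of $\nabla$), the graded algebra $\gr\Oo_M\simeq\Sym_{\Oo_S}\Nn^*$ is identified with $\Sym_{\Oo_S}\Tt_S^{\sort}=\Oo_N$.

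To verify closedness of $\Oo_N\subset\Oo_{T^*S}$ under the Poisson bracket of $T^*S$, the Leibniz rule reduces the check to three cases on generators: $\{\Oo_S,\Oo_S\}=0\subset\Oo_S$; $\{\Tt_S^{\sort},\Oo_S\}\subset\Oo_S$ since a vector field acts on functions as a derivation; and $\{\Tt_S^{\sort},\Tt_S^{\sort}\}\subset\Tt_S^{\sort}$, i.e.\ involutivity of the null foliation. The last is exactly where coisotropy enters: for $f,g\in\Ii_S$ one has $[\nabla f,\nabla g]=\nabla\{f,g\}$ with $\{f,g\}\in\Ii_S$ (since coisotropy is $\{\Ii_S,\Ii_S\}\subset\Ii_S$), so under $\Nn^*\simeq\Tt_S^{\sort}$ the commutator of two sections of $TS^{\sort}$ again lies in $\Tt_S^{\sort}$.

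To show agreement with $\{\cdot,\cdot\}_0$, I would replay the three computations from the proof of Lemma~\ref{def.sympl} on the generators of $\Oo_N$, with no change of substance. For $f\bmod\Ii_S,g\bmod\Ii_S\in\Oo_S$ both brackets vanish. For $(f\bmod\Ii_S^2)t^{-1}$ with $f\in\Ii_S$, paired with $g\bmod\Ii_S$, both give $(\nabla f)g|_S=\xi\bar g$ with $\xi=\nabla f|_S$ and $\bar g=g|_S$. For two elements of $\Nn^*t^{-1}$, both produce $\{f,g\}\bmod\Ii_S^2$, identified with $\nabla\{f,g\}|_S=[\nabla f,\nabla g]|_S\in\Tt_S^{\sort}$. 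The only place where the Lagrangian proof appealed to the full $\Tt_S$ is now replaced by $\Tt_S^{\sort}$, which is precisely the subsheaf generated by Hamiltonian vector fields $\nabla f$ for $f\in\Ii_S$.

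The main obstacle is not any single computation but the bookkeeping: one must ensure that the identifications $\Nn^*\simeq\Tt_S^{\sort}$ and $\gr\Oo_M\simeq\Oo_N$ are compatible with both the algebra structure and the Poisson bracket throughout. Once this is set up, the proof is essentially mechanical.
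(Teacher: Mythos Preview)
Your proposal is correct and follows exactly the approach the paper indicates: the paper's own proof is a one-liner stating that the argument is the same as for Lemma~\ref{def.sympl}, taking into account that $\Nn^*\simeq\Tt_S^{\sort}$ is generated by the vector fields $\nabla f$ with $f\in\Ii_S$. You have simply unpacked this remark in detail, making explicit the identification $\Oo_N\simeq\Sym_{\Oo_S}\Tt_S^{\sort}$, the involutivity of $\Tt_S^{\sort}$ (from $\{\Ii_S,\Ii_S\}\subset\Ii_S$), and the three generator computations.
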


A proof is the same as for Lemma~\ref{def.sympl}, taking into account that $\Nn^*\simeq\Tt_S^{\sort}$ is generated by~$\nabla{f}$, $f\in\Ii_S$.

\begin{definition}
A coisotropic subvariety $S\subset M$ is called \emph{special} if $\g{p}\subset(T_pS)^{\sort}$, $\forall p\in S$. A special coisotropic subvariety is automatically $G$-stable.
\end{definition}

For a special subvariety $S$ in a Hamiltonian $G$-variety~$M$, the moment map of $M$ can be shifted so that $\Phi(S)=0$. Indeed, $\langle d_p\Phi(\nu),\xi\rangle=\omega(\xi{p},\nu)=0$, $\forall p\in S$, $\nu\in T_pS$, $\xi\in\g$, whence $\Phi(S)$ is a $G$-fixed point. The construction of the total moment map and its properties extend word by word. Note that the moment map of $T^*S$ factors as~$\Phi_0\psi$.

\begin{theorem}\label{c&r&mom(coiso)}
Let $M$ be a Hamiltonian $G$-variety and $S\subset M$ be a special coisotropic subvariety. Then $2c(S)=\cork{M}+ 2\dim{S}-\dim{M}$, $r(S)=\df{M}$, and the closures of the images of the moment maps for $M$ and $T^*S$ coincide.
\end{theorem}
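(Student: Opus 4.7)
The strategy is to mimic the proofs of Theorems~\ref{c&r(Lagr)} and~\ref{mom(Lagr)} via deformation of $M$ to the normal bundle $N\simeq(TS^{\sort})^*$. Since the moment map of $T^*S$ factors as $\Phi^{T^*S}=\Phi_0\psi$ through the surjection $\psi:T^*S\to N$, its image coincides with $\Phi_0(N)$, so it suffices to prove $\overline{\Phi(M)}=\overline{\Phi_0(N)}$. Given this equality, the numerical statements follow from Theorem~\ref{cork&def(T*X)} applied to $X=S$ together with the formul{\ae} $\df=\dim\overline{\Im\Phi}/G$ and $\cork=\dim(\cdot)-\dim\overline{\Im\Phi}-\dim\overline{\Im\Phi}/G$: one gets $r(S)=\df T^*S=\df M$ and $2c(S)=\cork T^*S=\cork M+2\dim S-\dim M$, the correction term reflecting that $\dim T^*S=2\dim S$ need not equal~$\dim M$.

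To prove the moment map equality, I apply the Local Structure Theorem to $X=M$, $Y=S$ and import the setup of Subsection~\ref{deg.horosph} together with the deformation of Subsection~\ref{deform}. The two preceding lemmas show that $M_0=N$ is a Poisson variety whose structure descends from~$T^*S$ via~$\psi$, while specialness of $S$ lets us shift $\Phi$ so that $\Phi(S)=\{0\}$, which makes the total moment map $\widehat\Phi:\widehat M\to\g^*\times\AAA^1$ well-defined with $\widehat\Phi|_{M_0}=\Phi_0$. Choosing $m=\dim(Z\cap S)$ algebraically independent $L_0$-invariant functions $f_i$ on $Z\cap S$ and extending them to $P_0$-invariant functions $F_i$ on $\Mo$, I construct $\Zz$, $\widehat\Zz$, $\Ww$, $\widehat\Ww$ exactly as in Subsection~\ref{proof}. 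If the analogs of Propositions~\ref{fixed.pts} and~\ref{def.horosph} go through, then $\overline{\Phi(\Ww)}=\overline\p_0^{\ann}$ and $\overline{G\Ww}=M$; combined with Theorem~\ref{horosph} applied to~$S$, which yields $\overline{\Phi_0(N)}=G\overline\p_0^{\ann}$, this delivers the desired equality.

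The main obstacle is the transfer to the coisotropic setting of the dimension count $\dim\Zz^{\circ}=\dim\So^{\gamma}+m$ and the transversality identity $\codim\Ww=\dim\Ru[-]{\overline P}$. I expect these to carry over, because the only places where the Lagrangian hypothesis entered essentially in Subsection~\ref{proof} were to guarantee $\Phi(S)=0$ (replaced here by specialness) and to identify $\Nn^*\simeq\Tt_S$ (replaced by the weaker $\Nn^*\simeq\Tt_S^{\sort}$, still generated by $\nabla f$ for $f\in\Ii_S$, as used in the lemma immediately preceding the theorem). The normal cone argument (Lemmas~\ref{sec.cone} and~\ref{vec.bundle}) and the Bia{\l}ynicki-Birula decomposition (Theorem~\ref{BB-dec}) depend only on formal properties of the deformation, which persist in the coisotropic case. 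Verifying this carefully, especially the asserted codimension of $\Ww$, is the routine but most substantial part of the argument.
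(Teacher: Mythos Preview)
Your overall strategy matches the paper's, but there is a genuine gap in the choice of the functions~$f_i$. You take $m=\dim(Z\cap S)$ of them, exactly as in the Lagrangian proof, and expect $\dim\Zz^{\circ}=\dim\So^{\gamma}+m$. In the coisotropic setting this already fails at the first step. The image of $\nabla F_i$ in $N_p=T_pM/T_pS\simeq((T_pS)^{\sort})^*$ is $df_i|_{(T_pS)^{\sort}}$, so the $\nabla F_i$ are linearly independent modulo $T\So$ if and only if the $df_i$ are linearly independent modulo $\Ker\psi$. But specialness forces $\g p\subset(T_pS)^{\sort}$, hence the annihilator $\Ker\psi|_{\So}$ of $(T\So)^{\sort}$ lies inside the conormal bundle of the $U$-orbit foliation, i.e., inside the span of the $df_i$. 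As soon as $S$ is not Lagrangian, $\Ker\psi\ne0$ and the $m$ differentials are dependent modulo $\Ker\psi$; consequently the $m$ Hamiltonian fields $\nabla\widehat F_i$ are dependent along $\So\times\AAA^1$ modulo $T(\So\times\AAA^1)$ for every~$c$ (in particular $\nabla\widehat F_i|_{M_0}=-df_i|_{(T\So)^{\sort}}$), and the sections they produce via Lemma~\ref{sec.cone} do not cut out a rank-$m$ subbundle of the normal cone. Lemma~\ref{vec.bundle} then cannot be invoked, and neither the smoothness of $\Zz^{\circ}$ nor your dimension formula follows.

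The paper's remedy is exactly this point: keep only $k\le m$ of the $f_i$ whose differentials freely generate the conormal bundle of the $U$-orbit foliation \emph{modulo} $\Ker\psi$, and discard the rest. With $k$ functions the independence is restored, Proposition~\ref{fixed.pts} goes through with $\dim\Zz^{\circ}=\dim\So^{\gamma}+k$, and one identifies $\psi^{-1}(\Zz_0)\simeq(\overline M\cap\Ru P)\times T^*(Z\cap S)$ and $\psi^{-1}(\Ww_0)=\Uu$. The last identity is what yields $\codim_M\Ww=\codim_{T^*S}\Uu=\dim\Ru[-]{\overline P}$, the step you flagged as most substantial; once the correct number $k$ is in place, Proposition~\ref{def.horosph} extends verbatim and the remainder of your outline is sound.
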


\begin{proof}
It goes along the same lines as the proofs of Theorems~\ref{c&r(Lagr)},\ref{mom(Lagr)}, with appropriate modifications.

We choose $P_0$-invariant functions $f_i$ on $\So$ and extend them to $P_0$-invariant functions $F_i$ on $\Mo$ as in~\ref{proof}. Then $df_i$ span the conormal bundle of the foliation of $U$-orbits in~$\So$, which contains the annihilator $\Ker\psi|_{\So}$ of $(T\So)^{\sort}$ by speciality. Shrinking~$\Mo$, we can choose $f_1,\dots,f_k$ such that $df_1,\dots,df_k$ freely generate this conormal bundle modulo~$\Ker\psi$ and forget about~$f_i$, $i>k$. Then, as before, $\nabla{F_i}$~are linearly independent on $\So$ modulo $T\So$ and $\nabla\widehat{F}_i$~are linearly independent on $\So\times\AAA^1$ modulo $T(\So\times\AAA^1)$ (here we use $\nabla\widehat{F}_i|_{M_0}=-df_i|_{(T\So)^{\sort}}$).

Now we define subschemes $\Zz$ and $\widehat\Zz$ as in~\ref{proof}, and observe that $\psi^{-1}(\Zz_0)\simeq(\overline{M}\cap\Ru{P})\times T^*(Z\cap S)\subset\Ru{P}\times T^*(Z\cap S)\times\Ru[-]\p\simeq T^*\So$. Similarly to Proposition~\ref{fixed.pts} we prove that $\Zz$ contains a unique irreducible component $\Zz^{\circ}$ intersecting~$\So^{\gamma}$, which is smooth along~$\So^{\gamma}$, and $\dim\Zz^{\circ}=\dim\So^{\gamma}+k$.

After that, we define $\Ww$ and $\widehat\Ww$ as in~\ref{proof}, and observe that $\psi^{-1}(\Ww_0)=\Uu$. Indeed,
\begin{equation*}
\exists\lim_{t\to0}\gamma(t)\psi(q)\in\Zz_0\iff\exists\lim_{t\to0}\gamma(t)q\in\psi^{-1}(\Zz_0),
\end{equation*}
because $\Ker\psi|_{\So}\subset\Ru{P}\times T^*(Z\cap S)$ and $\gamma$ leaves $T^*(Z\cap S)$ pointwise fixed. Proposition~\ref{def.horosph}, together with the proof, is extended to our situation word by word and completes the proof of the theorem.
\end{proof}

\begin{corollary}
General $G$-orbits in $M$ and $N$ have the same dimension.
\end{corollary}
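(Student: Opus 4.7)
My plan is to factor the comparison through $T^{*}S$. For any symplectic Hamiltonian $G$-variety the identity $\Im d_{p}\Phi=\g_{p}^{\ann}$ gives $\dim Gp=\dim\overline{\Phi(M)}$ for general $p$. Applied to $M$ and to $T^{*}S$ (whose moment map factors as $\Phi_{T^{*}S}=\Phi_{0}\psi$), combined with the equality of moment map image closures from Theorem~\ref{c&r&mom(coiso)}, this already yields $\dim Gp=\dim G\tilde p$ for general $p\in M$ and $\tilde p\in T^{*}S$.

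It then remains to compare general $G$-orbits in $T^{*}S$ and $N$ via the $G$-equivariant surjective vector bundle map $\psi\colon T^{*}S\to N$. Pick a general $\tilde p\in T^{*}S$, set $q=\psi(\tilde p)$ and $y=\pi(q)$. The crucial input is specialness: the condition $\g y\subseteq(T_{y}S)^{\sort}$ means the $G$-action descends trivially to the local symplectic quotient $\bar S=S/(TS)^{\sort}$, so $G_{y}$ acts trivially on $T_{y}\bar S=T_{y}S/(T_{y}S)^{\sort}$ and therefore trivially on its dual
\begin{equation*}
  \ker\psi|_{y}\;=\;((T_{y}S)^{\sort})^{\ann}\;\subseteq\;T^{*}_{y}S.
\end{equation*}
Thus $\ker\psi|_{y}$ is a trivial $G_{y}$-submodule of $T^{*}_{y}S$. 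For generic $y\in S$ the identity component $G_{y}^{\circ}$ is reductive (by the Local Structure Theorem applied to $Y=X=S$, $G_{y}^{\circ}$ is conjugate in $P$ to $L_{0}^{\circ}$), so by complete reducibility the short exact sequence $0\to\ker\psi|_{y}\to T^{*}_{y}S\to N_{y}\to 0$ splits as $G_{y}^{\circ}$-modules.

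Writing $\tilde p=(k,q)\in\ker\psi|_{y}\oplus N_{y}$ under such a splitting, $G_{y}^{\circ}$ acts by $g\cdot(k,q)=(k,gq)$, whence $G_{\tilde p}^{\circ}=G_{q}^{\circ}$, and in particular $\dim G\tilde p=\dim Gq$. Combined with the first paragraph this gives $\dim Gq=\dim Gp$, as required. The most delicate step is the reductivity of $G_{y}^{\circ}$ for generic $y$, which underpins the equivariant splitting of the sequence above; beyond this standard fact (justified in our setting by the Local Structure Theorem and Luna's slice theorem), the rest of the argument is a direct consequence of specialness and of what has already been established in Theorem~\ref{c&r&mom(coiso)}.
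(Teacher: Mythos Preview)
Your first paragraph is fine and matches the paper's opening move. The gap is in the second step: your claim that $G_y^{\circ}$ is reductive (``conjugate in $P$ to $L_0^{\circ}$'') for generic $y\in S$ is false. The Local Structure Theorem only produces a $P$-stable open set $\So\simeq\Ru{P}\times Z$ and identifies the $L$-stabilizer of a generic $z\in Z$ with $L_0$; it says nothing about the full $G$-stabilizer, which can be strictly larger and non-reductive. The simplest example is $G=\SL_2$ acting on $S=\PP^1$: here $L_0=T$ but $G_y$ is a Borel subgroup for every $y$. Luna's slice theorem does not rescue this, since it presupposes a closed orbit with reductive stabilizer. Without reductivity the short exact sequence $0\to\ker\psi|_y\to T^*_yS\to N_y\to 0$ need not split $G_y^{\circ}$-equivariantly, and indeed the action of $G_q^{\circ}$ on the affine fiber $\psi^{-1}(q)=\tilde p+\ker\psi|_y$ is by translations via the homomorphism $g\mapsto g\tilde p-\tilde p$ into the additive group $\ker\psi|_y$; nothing you have written forces this homomorphism to be trivial on $G_q^{\circ}$. (In the Lagrangian case $\ker\psi=0$ and the issue disappears, but the corollary concerns genuinely coisotropic $S$.)

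The paper circumvents this by a sandwich argument that does not require any splitting. It introduces a further $G$-equivariant map $\bar\rho:N|_{Gy}\to T^*(Gy)$, namely restriction of covectors from $(T_yS)^{\sort}$ to $\g y\subset(T_yS)^{\sort}$ (this is where specialness is used), so that $\rho=\bar\rho\psi$ and the moment map on $N$ factors as $\Phi_0=\bar\Phi\bar\rho$ with $\bar\Phi$ the moment map of $T^*(Gy)$. For general $q\in T^*S$, $p=\psi(q)\in N$, and $\bar q=\rho(q)=\bar\rho(p)\in T^*(Gy)$, the equivariant maps give $\dim Gq\ge\dim Gp\ge\dim G\bar q$. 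The equality $\dim Gq=\dim G\bar q$ follows because $\overline{\Phi(T^*S)}=\overline{\bar\Phi(T^*(Gy))}$: by Theorem~\ref{horosph} both are $G\p_0^{\ann}$, as the local data $(P,L_0)$ for $S$ and for a general orbit $Gy\subset S$ coincide. This forces $\dim Gp=\dim Gq$ without ever needing $G_y^{\circ}$ reductive.
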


\begin{proof}


We recall that in a symplectic variety $M$ we have $\dim Gx = \dim\overline{\Phi(M)}$ for general $x\in M$. For a Poisson variety $N$ this is not so obvious: one has to consider the image of the moment map on a general symplectic leaf in $N$ instead. However it turns out, due to the structure of the moment map for~$T^*S$, that all these images have one and the same closure.

Specifically, let $p\in N$ be a general point. We have $p\in N_y$, where $y\in S$ is a general point. Take a general point $q\in\psi^{-1}(p)$. The moment map $\Phi:T^*S|_{Gy}\to\g^*$ factors as $\Phi=\overline\Phi\rho$, where $\overline\Phi:T^*(Gy)\to\g^*$ is the moment map and $\rho:T^*S|_{Gy}\to T^*(Gy)$ is the restriction map. On the other hand, since $\Phi=\Phi_0\psi$, the moment map $\Phi_0:N|_{Gy}\to\g^*$ factors as $\Phi_0=\overline\Phi\overline\rho$, where $\overline\rho:N|_{Gy}\to T^*(Gy)$ is the restriction of linear functions from $N^*_z\simeq(T_zS)^{\sort}$ to $\g{z}\subset(T_zS)^{\sort}$, $\forall z\in Gy$, which yields a splitting $\rho=\overline\rho\psi$. Put $\overline{q}=\rho(q)=\overline\rho(p)$. By Theorem~\ref{horosph}, (the closure of) the image of the moment map for a cotangent bundle is determined by the local structure of the underlying variety, specifically, by the normalizers of general $B$- and $U$-orbits therein, whence the closures of $\Phi(T^*S)$ and $\overline\Phi(T^*Gy)$ coincide. It follows that $\dim Gx = \dim Gq = \dim G\overline{q} = \dim Gp$.
\end{proof}

\end{document}